\newsavebox{\largestimage}
\tikzstyle{every picture}=[line join=round,line cap=round,every label/.append style={font=\small},label distance=-1pt,line width=.8pt]
\renewcommand{\email}[2][]{%
  \ifx\emails\@empty\relax\else{\g@addto@macro\emails{,\space}}\fi%
  \@ifnotempty{#1}{\g@addto@macro\emails{\textrm{(#1)}\space}}%
  \g@addto@macro\emails{#2}%
}
\newtheorem*{rep@theorem}{\rep@title}
\newcommand{\newreptheorem}[2]{%
\newenvironment{rep#1}[1]{%
 \def\rep@title{#2 \ref{##1}}%
 \begin{rep@theorem}}%
 {\end{rep@theorem}}}
\newtheorem{thm}{Theorem}
\newtheorem{clm}[thm]{Claim}
\newtheorem{lem}[thm]{Lemma}
\theoremstyle{remark}
\newenvironment{subproof}[1][\proofname]{%

\begin{proof}[#1]}{\end{proof}}
\renewcommand{\epsilon}{\varepsilon}
\renewcommand{\le}{\leqslant}
\renewcommand{\ge}{\geqslant}
\renewcommand{\leq}{\leqslant}
\renewcommand{\geq}{\geqslant}
\newcommand{\bc}{\mathsf{bc}}
\newcommand{\out}{\mathsf{Out}}
\renewcommand{\email}[2][]{%
  \ifx\emails\@empty\relax\else{\g@addto@macro\emails{,\space}}\fi%
  \@ifnotempty{#1}{\g@addto@macro\emails{\textrm{(#1)}\space}}%
  \g@addto@macro\emails{#2}%
}
\g@addto@macro\normalsize{
\setlength{\abovedisplayskip}{1.5mm}
\setlength{\belowdisplayskip}{1.5mm}
\setlength{\abovedisplayshortskip}{1.5mm}
\setlength{\belowdisplayshortskip}{1.5mm}}
\begin{document}

\title{The biclique covering number of grids}
\author[K.~Guo]{Krystal Guo\textsuperscript{1}}
\email[1]{guo.krystal@gmail.com}
\author[T.~Huynh]{Tony Huynh\textsuperscript{2}}
\email[2]{tony.bourbaki@gmail.com}
\author[M.~Macchia]{Marco Macchia\textsuperscript{3}}
\email[3]{mmacchia@ulb.ac.be}
\address[1,2,3]{D\'epartement de Math\'ematique, Universit\'e libre de Bruxelles}
\date{}

\begin{abstract}
We determine the exact value of the biclique covering number for all grid graphs. 
\end{abstract}

\maketitle

\section{Introduction} \label{sec:introduction}
Let $G$ be a graph.  A \emph{biclique} of $G$ is a complete bipartite subgraph.
The \emph{biclique covering number} of $G$, denoted $\bc(G)$, is the minimum number of bicliques of $G$ required to cover the edges of $G$.
The biclique covering number is studied in fields as diverse as polyhedral combinatorics~\cite{KW15, AFFHM17}, biology~\cite{NMW78}, and communication complexity~\cite{razborov90}, where it is also known as \emph{bipartite dimension} or \emph{rectangle covering number}.  

Computing the biclique covering number is a classic $\mathsf{NP}$-hard problem. Indeed, deciding if $\bc(G) \le k$ appears as problem \textbf{GT18} in Garey and Johnson~\cite{GJ79}.  It is also $\mathsf{NP}$-hard to approximate within a factor of $n^{1-\epsilon}$, where $n$ is the number of vertices of $G$ (see~\cite{CHHK14}).  

As such, there are very few classes of graphs for which we know the biclique covering number exactly. For example, it is well-known that the biclique covering number of the complete graph $K_n$ is $\lceil \log_2 n \rceil$ (see~\cite{FH96} for a proof).  Note that the minimum number of bicliques that \emph{partition} $E(K_n)$ is $n-1$ by the Graham-Pollak theorem~\cite{GP71}. 
This result was later extended to biclique coverings $\mathcal{C}$ of $K_n$ such that each edge is in at most $k$ bicliques of $\mathcal C$.  Alon~\cite{Alon97} showed that the minimum number of bicliques in such coverings is $\Theta(kn^{\nicefrac{1}{k}})$.
Two more classes of graphs for which we know the  biclique covering number exactly are $K_{2n}^-$ and $K_{n,n}^-$, which are the graphs obtained from $K_{2n}$ and $K_{n,n}$ by deleting the edges of a perfect matching. The biclique covering number of $K_{2n}^-$ is $\lceil \log_2 n \rceil$ (see~\cite{moazami18}) and the biclique covering number of $K_{n,n}^-$ is the smallest $k$ for which $n \leq \binom{k}{\lfloor \nicefrac{k}{2} \rfloor}$ (see~\cite{FJKK07,BFRK08}).  

In this paper, we determine the biclique covering number for all grids. Let $G_{p, q}$ be the $p \times q$ grid.  Recall that $G_{p,q}$ has vertex set $[q] \times [p]$, where $(a,b)$ is adjacent to $(a',b')$ if and only if $|a-a'|+|b-b'|=1$.  The following is our main result. 

\begin{thm} \label{thm:main}
For all integers $1 \le p \le q$,
\[
\bc(G_{p,q}) = 
\begin{cases} 
\frac{pq}{2} -1, & \text{if $p$ is even and $q-1 = k (p-1) + 2 \ell$ for some integers $0 \le \ell < k$;} \\[6pt]
\big\lfloor \frac{pq}{2} \big\rfloor, & \text{otherwise.} 
\end{cases}
\]
\end{thm}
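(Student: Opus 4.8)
The plan is to first reduce the problem to a purely combinatorial covering question, then treat the upper and lower bounds separately, the latter being the main obstacle.

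\textbf{Step 1 (classifying the bicliques).} I would begin by observing that the grid is extremely poor in bicliques: any two vertices of $G_{p,q}$ have at most two common neighbours, and two common neighbours occur precisely for the two diagonal pairs of a unit square. Hence the only bicliques are the stars $K_{1,k}$ with $k \le 4$ centred at a vertex, and the unit $4$-cycles (``squares''). Thus a biclique cover is exactly a family of stars and squares covering every edge, and $\bc(G_{p,q}) = s + c$ for the minimum number of stars $s$ and squares $c$. I stress that one must \emph{not} pass to an edge partition: the savings below come precisely from squares overlapping along an edge, so overlaps must be retained.

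\textbf{Step 2 (upper bounds).} For the generic value I would use stars alone. Since $G_{p,q}$ is bipartite, by König's theorem its minimum vertex cover has size $\lfloor pq/2\rfloor$, and the stars centred at such a cover cover all edges, giving $\bc(G_{p,q}) \le \lfloor pq/2\rfloor$. For the exceptional value $\tfrac{pq}{2}-1$ (only possible when $p$ is even) I would exhibit an explicit cover by overlapping squares in a ``brick'' pattern: a chain of consecutive squares sharing vertical edges tiles a horizontal strip using exactly one square fewer than the matching number of that strip, and the condition $q-1 = k(p-1)+2\ell$ with $0 \le \ell < k$ is precisely the requirement that these chains fit across the $p$ rows while the seams between strips are absorbed at no extra cost. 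Verifying this covers every edge with $\tfrac{pq}{2}-1$ pieces is a direct, if careful, check.

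\textbf{Step 3 (lower bound).} This is the crux. I would reformulate an optimal solution as a set $A$ of star centres together with a minimum family of squares covering the remaining edges, namely all edges of the induced subgraph $G_{p,q}-A$; the objective becomes $|A|+\sigma$, where $\sigma$ is the number of squares. Comparing with the vertex-cover bound, replacing two cover vertices by one ``full'' square saves one unit, so the whole theorem reduces to showing that the total saving over $\lfloor pq/2\rfloor$ is at most one. The key phenomenon is that profitable squares cannot act independently: a maximal chain of edge-sharing squares covers a ladder using exactly one fewer square than its matching number (so each chain saves exactly one), but any two chains must be separated by a \emph{seam} of horizontal edges whose covering cost cancels the second saving. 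The engine for making this precise is the boundary degree-deficiency identity $\sum_v (4-\deg v) = 2(p+q)$, which I would discharge onto the square-chains and seams to prove the net saving is at most one.

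\textbf{Main obstacle.} The hardest part is the exact bookkeeping in Step 3: controlling how square-chains interact across the grid, proving that at most one net saving survives, and matching the seam arithmetic to the divisibility condition $q-1=k(p-1)+2\ell$. I expect the parity of $p$ to enter decisively here — for $p$ odd the vertical seams never align to permit any saving, yielding the clean value $\lfloor pq/2\rfloor$, whereas for $p$ even a single global chain can save exactly one precisely when the condition holds.
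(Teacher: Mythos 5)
Your Step 1 (only stars and unit $4$-cycles are bicliques) and your generic upper bound in Step 2 coincide with the paper's Lemma~\ref{lem:checkerboard}. Beyond that there are two genuine gaps. The first is your construction for the exceptional value $\frac{pq}{2}-1$: as described, it fails for every even $p\ge 4$. If you cover horizontal strips of height two by chains of $q-1$ squares sharing vertical edges, then each of the $\frac{p}{2}-1$ seams between consecutive strips contains $q$ vertical edges touched by no chain, and since any single biclique covers at most two of them, each seam costs at least $\lceil q/2\rceil$ additional bicliques. The seams are emphatically not ``absorbed at no extra cost'': the total is at least $\frac{p}{2}(q-1)+\bigl(\frac{p}{2}-1\bigr)\lceil q/2\rceil>\frac{pq}{2}-1$. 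The paper's construction (Lemma~\ref{lem:constructions}) is different in kind: $G_{p,q}$ is decomposed into $k$ blocks $G_{p,p}$ and $\ell$ spacers $G_{p,3}$, and each block is covered using the $4$-cycles along a \emph{diagonal}, with the two diagonal orientations alternating so that consecutive blocks share $\frac{p}{2}-1$ bicliques. The condition $q-1=k(p-1)+2\ell$ is precisely the arithmetic of that decomposition (a diagonal chain spans $p-1$ columns of edges, a spacer spans $2$); a horizontal brick pattern cannot produce it.

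The second, more serious gap is in Step 3. Reducing the theorem to ``the total saving over $\lfloor pq/2\rfloor$ is at most one'' is not the right reduction: for $p$ even you must show that a saving of one \emph{forces} $q-1=k(p-1)+2\ell$, i.e.\ that when the condition fails no saving is possible at all; otherwise the value is left undetermined between $\frac{pq}{2}-1$ and $\frac{pq}{2}$. You flag this (``matching the seam arithmetic to the divisibility condition'') but offer no mechanism beyond the identity $\sum_v(4-\deg v)=2(p+q)$, which is only the entry point. The paper's analogue is the waste identity $w(\mathcal{C})=4|\mathcal{C}|-|E(G_{p,q})|=p+q-4$ for a hypothetical cover of size $\frac{pq}{2}-1$, and converting it into the divisibility condition occupies the entire second half of the paper: a normalization of covers (Lemma~\ref{lem:cover_props}), a boundary analysis via fences, links and staircases showing every staircase contains a thick edge (Claim~\ref{clm:containingthick}), an exact count forcing all staircases to pair up into pyramids and double staircases (Claim~\ref{clm:partition}), and finally a propagation argument showing the cover must agree, block by block, with the construction of Lemma~\ref{lem:constructions}, from which the condition is read off the double-staircase lengths. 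Your guiding picture is also off: in the extremal covers it is not that ``one chain saves one and seams cancel the rest''; every $p\times p$ block saves one locally, and the global saving is still one because consecutive blocks share bicliques. Finally, note that the unconditional bound $\bc(G_{p,q})\ge\frac{pq}{2}-1$ for even $p$ (and $\ge\lfloor pq/2\rfloor$ for odd $p$), which any version of your case analysis needs as a floor, has a much shorter proof than discharging: Lemma~\ref{lem:lower} exhibits the nested-cycles edge set $S(G_{p,q})$, of size $pq-2$ (respectively $pq-1$), of which every biclique contains at most two edges.
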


Since $G_{p,q} \simeq G_{q,p}$, our main result determines the biclique covering number of all grid graphs.  This settles an open problem raised by Denis Cornaz at the $9$\textsuperscript{th} Cargèse Workshop on Combinatorial Optimization in $2018$. Note that for $p$ even and $q \geq p(p-1)$, Theorem~\ref{thm:main} implies that $\bc(G_{p,q}) = \nicefrac{pq}{2}-1$.

\section{Main result} \label{sec:the_proof}
Since we are only interested in biclique covers of minimum size, we may assume that biclique covers only consist of maximal bicliques (under edge inclusion). For brevity, we call such biclique covers simply \emph{covers}.  
For $G_{p,q}$, this implies that covers consist of elements which are isomorphic to  $K_{1,3}, K_{1,4}$, or the $4$-cycle.  
We first establish the following upper bound. 

\begin{lem} \label{lem:checkerboard}
For all integers $1 \leqslant p \leqslant q$, $\bc(G_{p,q}) \leq \lfloor \nicefrac{pq}{2} \rfloor$.  
\end{lem}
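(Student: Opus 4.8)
The plan is to exploit the fact that $G_{p,q}$ is bipartite and cover its edges using stars drawn from a single part of the bipartition. Concretely, color each vertex $(a,b) \in [q]\times[p]$ black or white according to the parity of $a+b$; this is a proper $2$-coloring, so every edge of $G_{p,q}$ joins a black vertex to a white vertex. The two color classes partition the $pq$ vertices into sets of sizes $\lceil pq/2\rceil$ and $\lfloor pq/2\rfloor$, and I would fix attention on the smaller class $S$, of size exactly $\lfloor pq/2\rfloor$.

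Next I would cover the edges by the stars centered at the vertices of $S$. For each $v\in S$, let $T_v$ denote the set of all edges of $G_{p,q}$ incident with $v$; since $v$ has degree at most $4$, the subgraph $T_v$ is a star $K_{1,d}$ with $1 \le d \le 4$ (or empty if $v$ is isolated, which does not occur for $p,q \ge 1$ with an edge present), and every star is a complete bipartite subgraph, hence a biclique. (We do not need these bicliques to be maximal for an upper bound.) Because every edge has exactly one endpoint in $S$, each edge lies in precisely one star $T_v$, so the collection $\{T_v : v \in S\}$ covers $E(G_{p,q})$. This exhibits a biclique cover of size $|S| = \lfloor pq/2\rfloor$, giving $\bc(G_{p,q}) \le \lfloor pq/2\rfloor$.

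There is essentially no serious obstacle here, since this is the easy (upper-bound) direction and the construction is completely explicit; the only points requiring care are the routine parity count showing that the smaller color class has exactly $\lfloor pq/2\rfloor$ vertices, and the observation that each center's incident edges form a genuine biclique. The real difficulty of Theorem~\ref{thm:main} will lie in the matching lower bound and in the delicate improvement from $\lfloor pq/2\rfloor$ to $\tfrac{pq}{2}-1$ in the exceptional even-$p$ case, neither of which is needed for this lemma.
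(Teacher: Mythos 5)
Your proposal is correct and is essentially identical to the paper's proof: both observe that $G_{p,q}$ is bipartite with parts of sizes $\lfloor \nicefrac{pq}{2} \rfloor$ and $\lceil \nicefrac{pq}{2} \rceil$, and cover all edges by the stars centered at the vertices of the smaller part. The only difference is that you spell out the parity coloring and the fact that each edge lies in exactly one such star, details the paper leaves implicit.
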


\begin{proof}
Observe that $G_{p,q}$ is a bipartite graph with bipartition $(X,Y)$ where $|X|=\lfloor \nicefrac{pq}{2} \rfloor$ and $|Y|=\lceil \nicefrac{pq}{2} \rceil$.  Therefore taking the set of stars centered at vertices in $X$ gives a cover of size $\lfloor \nicefrac{pq}{2} \rfloor$. See Figure~\ref{fig:checkerboard}. 
\end{proof}

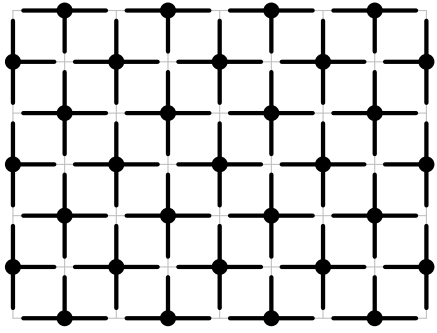
\begin{figure}[ht!]
\scalebox{.8}{\begin{tikzpicture}[scale=.85]
\tikzset{dot/.style = {draw,fill,circle, inner sep = 1.95pt},
frame/.style = {draw=lightgray,line width=.5pt},
biclique/.style = {draw,line width=2pt},
brace/.style = {decorate,decoration={brace,amplitude=10pt},xshift=0pt,yshift=0pt,line width=1.5pt}
}
    \def\e{.2}
    \def\p{6}
    \def\q{8}
    \def\w{.1}
    \def\h{.3}

    \foreach \x in {0,...,\q} {
        \draw [frame] (\x,0) -- (\x,\p);
    }

    \foreach \y in {0,...,\p} {
        \draw [frame] (0,\y) -- (\q,\y);
    }

    \foreach \x in {1,3,5,7} {
        \begin{scope}[biclique,shift={(\x,0)},rotate=0]
        \node[dot] at (0,0) {};
        \draw (-1+\e,0) -- (1-\e,0);
        \draw (0,0) -- (0,1-\e);
        \end{scope}
        
        \begin{scope}[biclique,shift={(\x,\p)},rotate=180]
        \node[dot] at (0,0) {};
        \draw (-1+\e,0) -- (1-\e,0);
        \draw (0,0) -- (0,1-\e);
        \end{scope}
    }
    
    \foreach \y in {1,3,5} {
        \begin{scope}[biclique,shift={(0,\y)},rotate=-90]
        \node[dot] at (0,0) {};
        \draw (-1+\e,0) -- (1-\e,0);
        \draw (0,0) -- (0,1-\e);
        \end{scope}
        
        \begin{scope}[biclique,shift={(\q,\y)},rotate=90]
        \node[dot] at (0,0) {};
        \draw (-1+\e,0) -- (1-\e,0);
        \draw (0,0) -- (0,1-\e);
        \end{scope}
    }
    
    \foreach \x in {2,4,6} {
        \foreach \y in {1,3,5} {
            \begin{scope}[biclique,shift={(\x,\y)}]
            \node[dot] at (0,0) {};
            \draw (-1+\e,0) -- (1-\e,0);
            \draw (0,-1+\e) -- (0,1-\e);
            \end{scope}
        }
    }
    
    \foreach \x in {1,3,5,7} {
        \foreach \y in {2,4} {
            \begin{scope}[biclique,shift={(\x,\y)}]
            \node[dot] at (0,0) {};
            \draw (-1+\e,0) -- (1-\e,0);
            \draw (0,-1+\e) -- (0,1-\e);
            \end{scope}
        }
    }


    
\end{tikzpicture}}
\caption{The stars centered at black vertices are a cover.}
\label{fig:checkerboard}
\end{figure}

We now establish the following lower bounds.  

\begin{lem} \label{lem:lower}
For all integers $1 \le p \le q$, \\[3pt]
$
\begin{array}{r@{\hskip3pt}c@{\hskip3pt}ll}
    \big\lfloor \nicefrac{pq}{2} \big\rfloor &\le& \bc(G_{p,q}), & \text{if $p$ is odd.} \\[6pt]
    \nicefrac{pq}{2} - 1 &\le& \bc(G_{p,q}), & \text{if $p$ is even.}
\end{array}
$
\end{lem}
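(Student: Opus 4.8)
The plan is to exploit the dichotomy recorded just above the statement: every maximal biclique of $G_{p,q}$ is either a \emph{star} (a $K_{1,3}$ or $K_{1,4}$, hence centred at a vertex of degree $3$ or $4$) or a \emph{square} (a $4$-cycle, i.e.\ a unit square of the grid). Fix a minimum cover $\mathcal C$ and split it as $\mathcal C=\mathcal S\cup\mathcal D$, where $\mathcal S$ is the set of squares and $\mathcal D$ the set of stars; let $D\subseteq V(G_{p,q})$ be the set of centres of the stars in $\mathcal D$. Any edge of the graph $H:=G_{p,q}-E(\mathcal S)$ lies in no square of $\mathcal C$, so it is covered by a star and therefore has an endpoint in $D$; that is, $D$ is a vertex cover of $H$. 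Since $H$ is bipartite, König's theorem gives $|D|\ge\tau(H)=\nu(H)$, where $\tau$ and $\nu$ are the vertex-cover and matching numbers. Hence
\[ \bc(G_{p,q})=|\mathcal S|+|\mathcal D|\ \ge\ |\mathcal S|+\nu\bigl(G_{p,q}-E(\mathcal S)\bigr). \]
This reformulation is what makes the proof work: pure edge-counting, and even the fooling-set (independent-edge) bound, fall strictly short of $\lfloor pq/2\rfloor$ already on $G_{3,4}$, because they cannot see the integral ``star versus square'' trade-off.

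It is standard that $G_{p,q}$ admits a matching saturating its smaller colour class, so $\nu(G_{p,q})=\lfloor pq/2\rfloor$. Everything then reduces to the following \textbf{matching-drop lemma}: for every set $\mathcal S$ of unit squares,
\[ \nu(G_{p,q})-\nu\bigl(G_{p,q}-E(\mathcal S)\bigr)\ \le\ |\mathcal S|+\varepsilon,\qquad \varepsilon=\begin{cases}0,&p\text{ odd},\\ 1,&p\text{ even}.\end{cases} \]
Granting this, applying it to the $\mathcal S$ of our fixed cover makes the $|\mathcal S|$ cancel in the displayed inequality, yielding $\bc(G_{p,q})\ge\nu(G_{p,q})-\varepsilon=\lfloor pq/2\rfloor-\varepsilon$; since $pq$ is even whenever $p$ is, this is precisely the two bounds of Lemma~\ref{lem:lower}.

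To prove the matching-drop lemma I would work on the bipartite graph $H$ through Hall's defect formula. With $X$ the smaller colour class, $\nu(H)=|X|-\max_{A\subseteq X}\bigl(|A|-|N_H(A)|\bigr)$, while Hall's condition holds in $G_{p,q}$ itself; choosing a maximiser $A^\ast$ this gives
\[ \nu(G_{p,q})-\nu(H)\ \le\ \bigl|N_{G_{p,q}}(A^\ast)\setminus N_H(A^\ast)\bigr|. \]
The right-hand side counts vertices $y$ of the larger class \emph{all} of whose edges into $A^\ast$ were deleted, i.e.\ lie in $E(\mathcal S)$, and I would bound it by charging each such $y$ to a square of $\mathcal S$ containing one of those deleted edges.

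The main obstacle is to make this charging \emph{almost injective}, so as to reach the sharp constant $|\mathcal S|+\varepsilon$ rather than the easy $2|\mathcal S|$ one gets from the two-edge-at-a-time bound. A single square can a priori cause both of its two vertices in the larger class to be lost, which is exactly where the spurious factor $2$ enters; ruling this out requires using that an interior vertex has degree $4$, so that deleting all of its edges into $A^\ast$ forces the cooperation of several squares, and that the only degree-$2$ vertices are the four corners. The residual additive $\varepsilon$ is a genuine parity phenomenon: for even $p$ a single square can drop the matching number by two (already in $G_{2,2}$, where one $C_4$ lowers $\nu$ from $2$ to $0$), and this one unit of slack must be tracked through the deficiency count, whereas for odd $p$ the ambient matching defect absorbs it. Carrying out this degree-and-parity bookkeeping — equivalently, controlling the increase in the number of odd Tutte–Berge components of $G_{p,q}-U$ caused by deleting $E(\mathcal S)$ — is the technical heart of the argument.
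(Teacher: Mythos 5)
Your reduction is correct as far as it goes: splitting a minimum cover $\mathcal C=\mathcal S\cup\mathcal D$ into $4$-cycles and stars, observing that the star centres form a vertex cover of $H:=G_{p,q}-E(\mathcal S)$, and concluding $\bc(G_{p,q})\ge|\mathcal S|+\nu(H)$ is sound (you do not even need K\"onig here, only the trivial direction $\nu(H)\le\tau(H)\le|\mathcal D|$). But this step merely reformulates the problem: the entire content of the lemma is then concentrated in your ``matching-drop lemma,'' which you do not prove. What you offer for it is a strategy (Hall's defect formula plus a charging of lost vertices to squares), together with an explicit admission that the charging must be made ``almost injective'' to beat the easy bound of $2|\mathcal S|$, and that this ``degree-and-parity bookkeeping'' is the technical heart. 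That heart is missing, and it is not a routine verification: a single square can indeed destroy two matching edges, and squares interact globally (a diagonal barrier of $p-1$ squares in $G_{p,p}$, $p$ even, achieves drop exactly $|\mathcal S|+1$, with every square ``paying'' in a way that depends on the whole configuration), so any charging must be global rather than local. Your sketch does not establish that the matching-drop inequality is even true, let alone prove it; as written, the proposal replaces the lemma by an unproven statement of comparable difficulty.

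It is also worth correcting the premise that led you down this path. You dismiss ``pure edge-counting, and even the fooling-set bound'' as falling strictly short already on $G_{3,4}$. That is true only for the ratio-one fooling set (a set of edges no two of which lie in a common biclique). The paper's proof is precisely a ratio-two counting argument: it exhibits an explicit edge set $S(G_{p,q})$ --- the union of the edge sets of the nested ``rings'' of the grid (outer cycle, then the outer cycle of what remains, and so on, taking all horizontal edges once one or two rows remain) --- of size $pq-1$ for $p$ odd and $pq-2$ for $p$ even, and checks that every maximal biclique ($K_{1,3}$, $K_{1,4}$, or a $4$-cycle) contains at most two edges of $S$, whence $\bc(G_{p,q})\ge\lceil|S(G_{p,q})|/2\rceil$. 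This two-line argument delivers exactly the bounds of the lemma and sidesteps all of the matching-theoretic machinery; if you wish to pursue your route, the matching-drop lemma itself requires a complete proof before the proposal can be considered a proof of the statement.
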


\begin{proof}
Let $1 \le p \le q$. We define a special subset $S(G_{p,q})$ of edges of $G_{p,q}$ inductively as follows.  If $p \in \{1,2\}$, we let $S(G_{p,q})$ be the set of all horizontal edges of $G_{p,q}$.  If $p \geq 3$, let $\out$ be the outer cycle of $G_{p,q}$ and define $S(G_{p,q})=E(\out) \cup S(G_{p,q}-V(\out))$. See Figure~\ref{fig:p_even} and Figure~\ref{fig:p_odd}. An easy induction gives 
\begin{enumerate}
    \item $|S(G_{p,q})|=pq-1$, if $p$ is odd.
    \item $|S(G_{p,q})|=pq-2$, if $p$ is even.
\end{enumerate}
\begin{figure}[ht!]\centering
\begin{subfigure}{.4\textwidth}\centering
    \scalebox{.8}{\begin{tikzpicture}[scale=.8]
\tikzset{frame/.style = {draw=lightgray,line width=.5pt}
}
    \def\e{.2}
    \def\p{5}
    \def\q{7}
    
    \clip (-2*\e,-2*\e-.5) rectangle (\q + 2*\e,\p + 2*\e+.5);

    \foreach \x in {0,...,\q} {
        \draw [frame] (\x,0) -- (\x,\p);
    }

    \foreach \y in {0,...,\p} {
        \draw [frame] (0,\y) -- (\q,\y);
    }
    
    \foreach \z in {0,1} {
        \draw[line width=2pt](\z,\z)--(\z,\p-\z)--(\q-\z,\p-\z)--(\q-\z,\z)--cycle;
    }
    
    \draw[line width=2pt](2,2)--(\q-2,2);
    \draw[line width=2pt](2,\p-2)--(\q-2,\p-2);

\end{tikzpicture}}
    \caption{$S(G_{p,q})$ for $p$ even.}
    \label{fig:p_even}
\end{subfigure}\hspace{1cm}
\begin{subfigure}{.4\textwidth}\centering
    \scalebox{.8}{\begin{tikzpicture}[scale=.8]
\tikzset{frame/.style = {draw=lightgray,line width=.5pt}
}
    \def\e{.2}
    \def\p{6}
    \def\q{7}
    
    \clip (-2*\e,-2*\e) rectangle (\q + 2*\e,\p + 2*\e);

    \foreach \x in {0,...,\q} {
        \draw [frame] (\x,0) -- (\x,\p);
    }

    \foreach \y in {0,...,\p} {
        \draw [frame] (0,\y) -- (\q,\y);
    }
    
    \foreach \z in {0,...,3} {
        \draw[line width=2pt](\z,\z)--(\z,\p-\z)--(\q-\z,\p-\z)--(\q-\z,\z)--cycle;
    }

\end{tikzpicture}}
    \caption{$S(G_{p,q})$ for $p$ odd.}
    \label{fig:p_odd}
\end{subfigure}
\caption{}
\label{fig:S_G_pq}
\end{figure}

On the other hand, every biclique of $G_{p,q}$ contains at most $2$ edges of $S(G_{p,q})$.  Therefore, $\bc(G_{p,q}) \geq  \lceil \nicefrac{|S(G_{p,q})|}{2} \rceil$, which completes the proof. 
\end{proof}

Next, we give values of $p$ and $q$ for which $\bc(G_{p,q}) = \lfloor \nicefrac{pq}{2} \rfloor-1$.

\begin{lem} \label{lem:constructions}
$\bc(G_{p,q}) = \nicefrac{pq}{2} -1$ if $p$ is even and $q-1 = k (p-1) + 2 \ell$ for some integers $0 \le \ell < k $.
\end{lem}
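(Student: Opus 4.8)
By Lemma~\ref{lem:lower}, for $p$ even we already have $\bc(G_{p,q}) \ge \nicefrac{pq}{2}-1$, so it suffices to exhibit a cover of size $\nicefrac{pq}{2}-1$. The guiding principle is to make that lower bound tight: since $|S(G_{p,q})| = pq-2$ and every biclique meets $S(G_{p,q})$ in at most two edges, a cover in which \emph{every} biclique contains exactly two edges of $S(G_{p,q})$ automatically has size $\nicefrac{|S(G_{p,q})|}{2} = \nicefrac{pq}{2}-1$. The whole task therefore reduces to pairing up the edges of $S(G_{p,q})$ so that each pair lies in a common biclique, while arranging that these same bicliques also cover every edge outside $S(G_{p,q})$.

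To build such a pairing I would exploit the explicit description of $S(G_{p,q})$ from the proof of Lemma~\ref{lem:lower}: for $p$ even it is the union of $\nicefrac{p}{2}-1$ nested rectangular rings together with the two horizontal paths running along the central rows. I would first tile the central $2\times(q-p+2)$ strip by unit $4$-cycles, one per column-gap; each such $4$-cycle captures the two central $S$-edges of that gap and, as a bonus, the two non-$S$ vertical edges joining the central rows. I would then cover each ring using a combination of unit $4$-cycles placed at its corners, each absorbing two consecutive ring-edges of $S$ together with the two non-$S$ edges pointing inward, and stars centered at the interior vertices of the sides, each absorbing two collinear ring-edges of $S$ plus one inward edge. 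The hypothesis $q-1 = k(p-1)+2\ell$ with $0 \le \ell < k$ is what I expect to govern exactly how the corner $4$-cycles must be staggered from one ring to the next so that the whole pattern closes up with no $S$-edge left over.

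The crux, and the only genuinely delicate point, is this closure. The vertical sides of each ring have odd length $p-1$, so a symmetric placement of corner $4$-cycles strands the middle edge of each short side; covering such a stranded edge costs an extra biclique and wipes out the saving --- this is precisely the failure that occurs for the values of $q$ \emph{not} satisfying the hypothesis. I therefore expect the condition $q-1 = k(p-1)+2\ell$ to encode exactly when the corner cells can be shifted across the successive rings, the surplus $2\ell$ absorbing the extra width beyond $k(p-1)$, so that each stranded short-side edge of one ring is picked up by the pattern of its neighbour. I would set up the construction by these two parameters, with the base case $G_{p,p}$ (where $k=1$, $\ell=0$) handled by the explicit pattern above, and verify, by the bookkeeping made transparent in figures analogous to Figures~\ref{fig:p_even} and~\ref{fig:p_odd}, that every edge is covered and that the biclique count is exactly $\nicefrac{pq}{2}-1$; combined with the lower bound of Lemma~\ref{lem:lower} this yields the claimed equality.
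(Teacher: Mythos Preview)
Your opening observation is exactly right and worth keeping: since $|S(G_{p,q})|=pq-2$ and every biclique meets $S$ in at most two edges, a cover of size $\nicefrac{pq}{2}-1$ must pair the $S$-edges perfectly, two per biclique. That is a clean necessary condition and a good lens through which to \emph{check} a construction. But from that point on your write-up is a plan, not a proof: every verb is ``I would'' or ``I expect'', and the part you yourself flag as the crux---how the staggering of corner $4$-cycles across successive rings closes up under the hypothesis $q-1=k(p-1)+2\ell$---is left as a hope. In fact the most literal reading of your scheme (put a $4$-cycle at \emph{each} corner of each ring, stars along the sides, $4$-cycles across the central strip) already overcounts in the smallest case $G_{4,4}$: four corner squares plus side stars plus the central square exceeds $7$. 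The rings of $S(G_{p,q})$ are a good bookkeeping device for the lower bound, but they are not the right unit for the construction.

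The paper's proof takes a different and much more concrete route. It ignores the concentric rings entirely and instead decomposes $G_{p,q}$ \emph{horizontally} into $k$ copies of $G_{p,p}$ interleaved with $\ell$ copies of $G_{p,3}$, which is possible precisely when $q-1=k(p-1)+2\ell$ with $0\le\ell<k$. On each $G_{p,p}$ block it uses one of two explicit covers of size $\nicefrac{p^2}{2}-1$ whose $4$-cycles run along a full diagonal of the square (so they cut across all the rings at once, rather than hugging any one of them). Adjacent $G_{p,p}$ blocks are given opposite diagonals so that $\nicefrac{p}{2}-1$ bicliques are shared along their common column; each interposed $G_{p,3}$ costs only $\nicefrac{p}{2}+1$ additional stars. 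A one-line arithmetic check then gives the total $\nicefrac{pq}{2}-1$. The parameters $k$ and $\ell$ thus enter as block counts, not as ring offsets, and the ``closure'' you were worried about is handled by the overlap of diagonals at block boundaries rather than by any ring-to-ring interaction.
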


\begin{proof}
By Lemma~\ref{lem:lower}, it suffices to construct a cover $\mathcal{C}$ of size $\nicefrac{pq}{2} -1$.
Since $q-1 = k (p-1) + 2 \ell$ for some integers $0 \le \ell < k $, we can decompose $G_{p,q}$ as $k$ copies of $G_{p,p}$ and $\ell$ copies of $G_{p,3}$ such that each of the $\ell$ copies of $G_{p,3}$ is between two copies of $G_{p,p}$.  Since $p$ is even, $G_{p,p}$ has two covers of size $\nicefrac{p^2}{2}-1$, whose set of $4$-cycles are the $4$-cycles of the two diagonals of $G_{p,p}$, respectively.  See Figure~\ref{fig:grid-6_6_blocks} for the case $p = 6$. 
 
 The cover $\mathcal{C}$ is constructed as follows. For every two consecutive copies of $G_{p,p}$, we use one cover from Figure~\ref{fig:grid-6_6_blocks} on one copy and the other cover from Figure~\ref{fig:grid-6_6_blocks} on the other copy.  Note that this results in $\nicefrac{p}{2}-1$ bicliques that are in both copies. For every copy of $G_{p,3}$ between two copies of $G_{p,p}$ we again use the two covers of size $\nicefrac{p^2}{2}-1$ on the two copies of $G_{p,p}$, and then we use $\nicefrac{p}{2}+1$ bicliques in $G_{p,3}$ to cover the remaining edges, see Figure~\ref{fig:grid_example_1}.  The total construction when $p = 6$ and $q = 25$ is given in Figure~\ref{fig:grid_example_2}. Observe that:
\[
|\mathcal{C}| = k\Big( \frac{p^2}{2}-1 \Big) + \ell\Big( \frac{p}{2}+1 \Big) - (k - \ell -1)\Big( \frac{p}{2}-1 \Big) = \frac{p}{2}\big( k(p-1)+2\ell+1 \big)-1= \frac{pq}{2} - 1\,. \qedhere
\]
\end{proof}

\begin{figure}[ht!]
\begin{subfigure}{.4\textwidth}\centering
	\scalebox{.8}{\begin{tikzpicture}[scale=1]
\tikzset{dot/.style = {draw,fill,circle, inner sep = 2pt},
frame/.style = {draw=lightgray,line width=.5pt},
biclique/.style = {draw,line width=2pt},
brace/.style = {decorate,decoration={brace,amplitude=10pt},xshift=0pt,yshift=0pt,line width=1.5pt}
}
    \def\e{.2}
    \def\p{5}
    \def\q{5}
    \def\w{.1}
    \def\h{.3}

    \foreach \x in {0,...,\q} {
        \draw [frame] (\x,0) -- (\x,\p);
    }

    \foreach \y in {0,...,\p} {
        \draw [frame] (0,\y) -- (\q,\y);
    }

    \begin{scope}[biclique,shift={(0,4)}]
    \draw[fill=gray!30] (0,0) -- (1,0) -- (1,1) -- (0,1) -- cycle;
    \end{scope}

    \begin{scope}[biclique,shift={(1,3)}]
    \draw[fill=gray!30] (0,0) -- (1,0) -- (1,1) -- (0,1) -- cycle;
    \end{scope}
    
    \begin{scope}[biclique,shift={(2,2)}]
    \draw[fill=gray!30] (0,0) -- (1,0) -- (1,1) -- (0,1) -- cycle;
    \end{scope}

    \begin{scope}[biclique,shift={(3,1)}]
    \draw[fill=gray!30] (0,0) -- (1,0) -- (1,1) -- (0,1) -- cycle;
    \end{scope}
    
     \begin{scope}[biclique,shift={(4,0)}]
    \draw[fill=gray!30] (0,0) -- (1,0) -- (1,1) -- (0,1) -- cycle;
    \end{scope}

    \begin{scope}[biclique,shift={(1,0)},rotate=0]
    \node[dot] at (0,0) {};
    \draw (-1+\e,0) -- (1-\e,0);
    \draw (0,0) -- (0,1-\e);
    \end{scope}
    
    \begin{scope}[biclique,shift={(3,0)},rotate=0]
    \node[dot] at (0,0) {};
    \draw (-1+\e,0) -- (1-\e,0);
    \draw (0,0) -- (0,1-\e);
    \end{scope}

    \begin{scope}[biclique,shift={(0,1)},rotate=0]
    \node[dot] at (0,0) {};
    \draw (0,0) -- (1-\e,0);
    \draw (0,-1+\e) -- (0,1-\e);
    \end{scope}
    
    \begin{scope}[biclique,shift={(0,3)},rotate=0]
    \node[dot] at (0,0) {};
    \draw (0,0) -- (1-\e,0);
    \draw (0,-1+\e) -- (0,1-\e);
    \end{scope}

    \begin{scope}[biclique,shift={(2,\p)},rotate=180]
    \node[dot] at (0,0) {};
    \draw (-1+\e,0) -- (1-\e,0);
    \draw (0,0) -- (0,1-\e);
    \end{scope}
    
    \begin{scope}[biclique,shift={(4,\p)},rotate=180]
    \node[dot] at (0,0) {};
    \draw (-1+\e,0) -- (1-\e,0);
    \draw (0,0) -- (0,1-\e);
    \end{scope}
    
    \begin{scope}[biclique,shift={(1,2)}]
    \node[dot] at (0,0) {};
    \draw (-1+\e,0) -- (1-\e,0);
    \draw (0,-1+\e) -- (0,1-\e);
    \end{scope}
    \begin{scope}[biclique,shift={(2,1)}]
    \node[dot] at (0,0) {};
    \draw (-1+\e,0) -- (1-\e,0);
    \draw (0,-1+\e) -- (0,1-\e);
    \end{scope}
    \begin{scope}[biclique,shift={(4,3)}]
    \node[dot] at (0,0) {};
    \draw (-1+\e,0) -- (1-\e,0);
    \draw (0,-1+\e) -- (0,1-\e);
    \end{scope}
    \begin{scope}[biclique,shift={(3,4)}]
    \node[dot] at (0,0) {};
    \draw (-1+\e,0) -- (1-\e,0);
    \draw (0,-1+\e) -- (0,1-\e);
    \end{scope}

    \begin{scope}[biclique,shift={(\q,2)}]
    \node[dot] at (0,0) {};
    \draw (-1+\e,0) -- (0,0);
    \draw (0,-1+\e) -- (0,1-\e);
    \end{scope}
    
    \begin{scope}[biclique,shift={(\q,4)}]
    \node[dot] at (0,0) {};
    \draw (-1+\e,0) -- (0,0);
    \draw (0,-1+\e) -- (0,1-\e);
    \end{scope}
    
\end{tikzpicture}}
	\caption{}
	\label{fig:grid-6_6_block_1}
\end{subfigure}\hspace{1cm}
\begin{subfigure}{.4\textwidth}\centering
	\scalebox{.8}{\begin{tikzpicture}[scale=1]
\tikzset{dot/.style = {draw,fill,circle, inner sep = 2pt},
frame/.style = {draw=lightgray,line width=.5pt},
biclique/.style = {draw,line width=2pt},
brace/.style = {decorate,decoration={brace,amplitude=10pt},xshift=0pt,yshift=0pt,line width=1.5pt}
}
    \def\e{.2}
    \def\p{5}
    \def\q{5}
    \def\w{.1}
    \def\h{.3}

    \foreach \x in {0,...,\q} {
        \draw [frame] (\x,0) -- (\x,\p);
    }

    \foreach \y in {0,...,\p} {
        \draw [frame] (0,\y) -- (\q,\y);
    }

\begin{scope}[shift={(\q,0)},xscale=-1]
    \begin{scope}[biclique,shift={(0,4)}]
    \draw[fill=gray!30] (0,0) -- (1,0) -- (1,1) -- (0,1) -- cycle;
    \end{scope}

    \begin{scope}[biclique,shift={(1,3)}]
    \draw[fill=gray!30] (0,0) -- (1,0) -- (1,1) -- (0,1) -- cycle;
    \end{scope}
    
    \begin{scope}[biclique,shift={(2,2)}]
    \draw[fill=gray!30] (0,0) -- (1,0) -- (1,1) -- (0,1) -- cycle;
    \end{scope}

    \begin{scope}[biclique,shift={(3,1)}]
    \draw[fill=gray!30] (0,0) -- (1,0) -- (1,1) -- (0,1) -- cycle;
    \end{scope}
    
     \begin{scope}[biclique,shift={(4,0)}]
    \draw[fill=gray!30] (0,0) -- (1,0) -- (1,1) -- (0,1) -- cycle;
    \end{scope}

    \begin{scope}[biclique,shift={(1,0)},rotate=0]
    \node[dot] at (0,0) {};
    \draw (-1+\e,0) -- (1-\e,0);
    \draw (0,0) -- (0,1-\e);
    \end{scope}
    
    \begin{scope}[biclique,shift={(3,0)},rotate=0]
    \node[dot] at (0,0) {};
    \draw (-1+\e,0) -- (1-\e,0);
    \draw (0,0) -- (0,1-\e);
    \end{scope}

    \begin{scope}[biclique,shift={(0,1)},rotate=0]
    \node[dot] at (0,0) {};
    \draw (0,0) -- (1-\e,0);
    \draw (0,-1+\e) -- (0,1-\e);
    \end{scope}
    
    \begin{scope}[biclique,shift={(0,3)},rotate=0]
    \node[dot] at (0,0) {};
    \draw (0,0) -- (1-\e,0);
    \draw (0,-1+\e) -- (0,1-\e);
    \end{scope}

    \begin{scope}[biclique,shift={(2,\p)},rotate=180]
    \node[dot] at (0,0) {};
    \draw (-1+\e,0) -- (1-\e,0);
    \draw (0,0) -- (0,1-\e);
    \end{scope}
    
    \begin{scope}[biclique,shift={(4,\p)},rotate=180]
    \node[dot] at (0,0) {};
    \draw (-1+\e,0) -- (1-\e,0);
    \draw (0,0) -- (0,1-\e);
    \end{scope}
    
    \begin{scope}[biclique,shift={(1,2)}]
    \node[dot] at (0,0) {};
    \draw (-1+\e,0) -- (1-\e,0);
    \draw (0,-1+\e) -- (0,1-\e);
    \end{scope}
    \begin{scope}[biclique,shift={(2,1)}]
    \node[dot] at (0,0) {};
    \draw (-1+\e,0) -- (1-\e,0);
    \draw (0,-1+\e) -- (0,1-\e);
    \end{scope}
    \begin{scope}[biclique,shift={(4,3)}]
    \node[dot] at (0,0) {};
    \draw (-1+\e,0) -- (1-\e,0);
    \draw (0,-1+\e) -- (0,1-\e);
    \end{scope}
    \begin{scope}[biclique,shift={(3,4)}]
    \node[dot] at (0,0) {};
    \draw (-1+\e,0) -- (1-\e,0);
    \draw (0,-1+\e) -- (0,1-\e);
    \end{scope}

    \begin{scope}[biclique,shift={(\q,2)}]
    \node[dot] at (0,0) {};
    \draw (-1+\e,0) -- (0,0);
    \draw (0,-1+\e) -- (0,1-\e);
    \end{scope}
    
    \begin{scope}[biclique,shift={(\q,4)}]
    \node[dot] at (0,0) {};
    \draw (-1+\e,0) -- (0,0);
    \draw (0,-1+\e) -- (0,1-\e);
    \end{scope}
    
\end{scope}
    
\end{tikzpicture}}
	\caption{}
	\label{fig:grid-6_6_block_2}
\end{subfigure}
\caption{Two covers of $G_{6,6}$ of size 17.}
\label{fig:grid-6_6_blocks}
\end{figure}
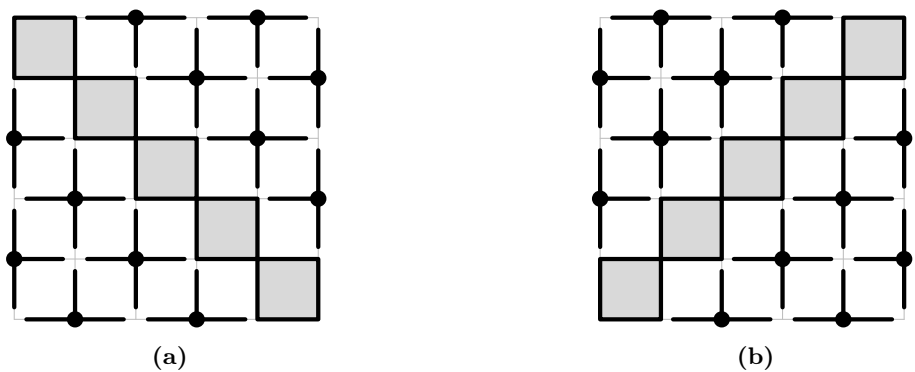

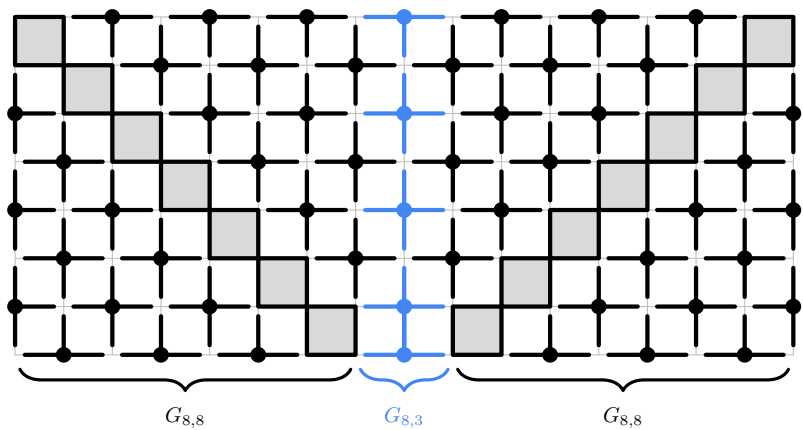
\begin{figure}[ht!]
\scalebox{.8}{\begin{tikzpicture}[scale=.8]
\tikzset{dot/.style = {draw,fill,circle, inner sep = 1.9pt},
frame/.style = {draw=lightgray,line width=.5pt},
biclique/.style = {draw,line width=2pt},
brace/.style = {decorate,decoration={brace,amplitude=10pt},xshift=0pt,yshift=0pt,line width=1.5pt}
}
    \def\e{.2}
    \def\p{7}
    \def\q{16}
    \def\w{.1}
    \def\h{.3}

    \foreach \x in {0,...,\q} {
        \draw [frame] (\x,0) -- (\x,\p);
    }

    \foreach \y in {0,...,\p} {
        \draw [frame] (0,\y) -- (\q,\y);
    }
    
\begin{scope}[]
    \begin{scope}[biclique,shift={(0,6)}]
    \draw[fill=gray!30] (0,0) -- (1,0) -- (1,1) -- (0,1) -- cycle;
    \end{scope}
    
    \begin{scope}[biclique,shift={(1,5)}]
    \draw[fill=gray!30] (0,0) -- (1,0) -- (1,1) -- (0,1) -- cycle;
    \end{scope}
    
    \begin{scope}[biclique,shift={(2,4)}]
    \draw[fill=gray!30] (0,0) -- (1,0) -- (1,1) -- (0,1) -- cycle;
    \end{scope}
    
    \begin{scope}[biclique,shift={(3,3)}]
    \draw[fill=gray!30] (0,0) -- (1,0) -- (1,1) -- (0,1) -- cycle;
    \end{scope}
    
    \begin{scope}[biclique,shift={(4,2)}]
    \draw[fill=gray!30] (0,0) -- (1,0) -- (1,1) -- (0,1) -- cycle;
    \end{scope}
    
    \begin{scope}[biclique,shift={(5,1)}]
    \draw[fill=gray!30] (0,0) -- (1,0) -- (1,1) -- (0,1) -- cycle;
    \end{scope}
    
    \begin{scope}[biclique,shift={(6,0)}]
    \draw[fill=gray!30] (0,0) -- (1,0) -- (1,1) -- (0,1) -- cycle;
    \end{scope}
    
    \foreach \x in {2,4,6} {
        \begin{scope}[biclique,shift={(\x,7)},rotate=180]
        \node[dot] at (0,0) {};
        \draw (-1+\e,0) -- (1-\e,0);
        \draw (0,0) -- (0,1-\e);
        \end{scope}
    }
    
    \foreach \x in {1,3,5} {
        \begin{scope}[biclique,shift={(\x,0)},rotate=0]
        \node[dot] at (0,0) {};
        \draw (-1+\e,0) -- (1-\e,0);
        \draw (0,0) -- (0,1-\e);
        \end{scope}
    }
    
    \foreach \y in {1,3,5} {
        \begin{scope}[biclique,shift={(0,\y)},rotate=-90]
        \node[dot] at (0,0) {};
        \draw (-1+\e,0) -- (1-\e,0);
        \draw (0,0) -- (0,1-\e);
        \end{scope}
    }
    
    \begin{scope}[biclique,shift={(1,2)}]
    \node[dot] at (0,0) {};
    \draw (-1+\e,0) -- (1-\e,0);
    \draw (0,-1+\e) -- (0,1-\e);
    \end{scope}
    
    \begin{scope}[biclique,shift={(2,1)}]
    \node[dot] at (0,0) {};
    \draw (-1+\e,0) -- (1-\e,0);
    \draw (0,-1+\e) -- (0,1-\e);
    \end{scope}
    
    \begin{scope}[biclique,shift={(1,4)}]
    \node[dot] at (0,0) {};
    \draw (-1+\e,0) -- (1-\e,0);
    \draw (0,-1+\e) -- (0,1-\e);
    \end{scope}
    
    \begin{scope}[biclique,shift={(2,3)}]
    \node[dot] at (0,0) {};
    \draw (-1+\e,0) -- (1-\e,0);
    \draw (0,-1+\e) -- (0,1-\e);
    \end{scope}
    
    \begin{scope}[biclique,shift={(3,2)}]
    \node[dot] at (0,0) {};
    \draw (-1+\e,0) -- (1-\e,0);
    \draw (0,-1+\e) -- (0,1-\e);
    \end{scope}
    
    \begin{scope}[biclique,shift={(4,1)}]
    \node[dot] at (0,0) {};
    \draw (-1+\e,0) -- (1-\e,0);
    \draw (0,-1+\e) -- (0,1-\e);
    \end{scope}
    
    \begin{scope}[biclique,shift={(3,6)}]
    \node[dot] at (0,0) {};
    \draw (-1+\e,0) -- (1-\e,0);
    \draw (0,-1+\e) -- (0,1-\e);
    \end{scope}
    
    \begin{scope}[biclique,shift={(4,5)}]
    \node[dot] at (0,0) {};
    \draw (-1+\e,0) -- (1-\e,0);
    \draw (0,-1+\e) -- (0,1-\e);
    \end{scope}
    
    \begin{scope}[biclique,shift={(5,4)}]
    \node[dot] at (0,0) {};
    \draw (-1+\e,0) -- (1-\e,0);
    \draw (0,-1+\e) -- (0,1-\e);
    \end{scope}
    
    \begin{scope}[biclique,shift={(6,3)}]
    \node[dot] at (0,0) {};
    \draw (-1+\e,0) -- (1-\e,0);
    \draw (0,-1+\e) -- (0,1-\e);
    \end{scope}
    
    \begin{scope}[biclique,shift={(7,2)}]
    \node[dot] at (0,0) {};
    \draw (-1+\e,0) -- (1-\e,0);
    \draw (0,-1+\e) -- (0,1-\e);
    \end{scope}
    
    \begin{scope}[biclique,shift={(7,4)}]
    \node[dot] at (0,0) {};
    \draw (-1+\e,0) -- (1-\e,0);
    \draw (0,-1+\e) -- (0,1-\e);
    \end{scope}
    
    \begin{scope}[biclique,shift={(7,6)}]
    \node[dot] at (0,0) {};
    \draw (-1+\e,0) -- (1-\e,0);
    \draw (0,-1+\e) -- (0,1-\e);
    \end{scope}
    
    \begin{scope}[biclique,shift={(6,5)}]
    \node[dot] at (0,0) {};
    \draw (-1+\e,0) -- (1-\e,0);
    \draw (0,-1+\e) -- (0,1-\e);
    \end{scope}
    
    \begin{scope}[biclique,shift={(5,6)}]
    \node[dot] at (0,0) {};
    \draw (-1+\e,0) -- (1-\e,0);
    \draw (0,-1+\e) -- (0,1-\e);
    \end{scope}
    
    \draw [brace] (\p-\w,-\h) -- (0+\w,-\h) node [midway,yshift=-8mm] {$G_{8,8}$};
\end{scope}

\begin{scope}[shift={(8,0)},lightblue]
    \begin{scope}[biclique,shift={(0,1)}]
    \node[dot] at (0,0) {};
    \draw (-1+\e,0) -- (1-\e,0);
    \draw (0,-1+\e) -- (0,1-\e);
    \end{scope}
    
    \begin{scope}[biclique,shift={(0,3)}]
    \node[dot] at (0,0) {};
    \draw (-1+\e,0) -- (1-\e,0);
    \draw (0,-1+\e) -- (0,1-\e);
    \end{scope}
    
    \begin{scope}[biclique,shift={(0,5)}]
    \node[dot] at (0,0) {};
    \draw (-1+\e,0) -- (1-\e,0);
    \draw (0,-1+\e) -- (0,1-\e);
    \end{scope}
    
    \begin{scope}[biclique,shift={(0,7)},rotate=180]
    \node[dot] at (0,0) {};
    \draw (-1+\e,0) -- (1-\e,0);
    \draw (0,0) -- (0,1-\e);
    \end{scope}
    
    \begin{scope}[biclique]
    \node[dot] at (0,0) {};
    \draw (-1+\e,0) -- (1-\e,0);
    \draw (0,0) -- (0,1-\e);
    \end{scope}
    
    \draw [brace] (1-\w,-\h) -- (-1+\w,-\h) node [lightblue,midway,yshift=-8mm] {$G_{8,3}$};
\end{scope}

\begin{scope}[shift={(\q,0)},xscale=-1]
    \begin{scope}[biclique,shift={(0,6)}]
    \draw[fill=gray!30] (0,0) -- (1,0) -- (1,1) -- (0,1) -- cycle;
    \end{scope}
    
    \begin{scope}[biclique,shift={(1,5)}]
    \draw[fill=gray!30] (0,0) -- (1,0) -- (1,1) -- (0,1) -- cycle;
    \end{scope}
    
    \begin{scope}[biclique,shift={(2,4)}]
    \draw[fill=gray!30] (0,0) -- (1,0) -- (1,1) -- (0,1) -- cycle;
    \end{scope}
    
    \begin{scope}[biclique,shift={(3,3)}]
    \draw[fill=gray!30] (0,0) -- (1,0) -- (1,1) -- (0,1) -- cycle;
    \end{scope}
    
    \begin{scope}[biclique,shift={(4,2)}]
    \draw[fill=gray!30] (0,0) -- (1,0) -- (1,1) -- (0,1) -- cycle;
    \end{scope}
    
    \begin{scope}[biclique,shift={(5,1)}]
    \draw[fill=gray!30] (0,0) -- (1,0) -- (1,1) -- (0,1) -- cycle;
    \end{scope}
    
    \begin{scope}[biclique,shift={(6,0)}]
    \draw[fill=gray!30] (0,0) -- (1,0) -- (1,1) -- (0,1) -- cycle;
    \end{scope}
    
    \foreach \x in {2,4,6} {
        \begin{scope}[biclique,shift={(\x,7)},rotate=180]
        \node[dot] at (0,0) {};
        \draw (-1+\e,0) -- (1-\e,0);
        \draw (0,0) -- (0,1-\e);
        \end{scope}
    }
    
    \foreach \x in {1,3,5} {
        \begin{scope}[biclique,shift={(\x,0)},rotate=0]
        \node[dot] at (0,0) {};
        \draw (-1+\e,0) -- (1-\e,0);
        \draw (0,0) -- (0,1-\e);
        \end{scope}
    }
    
    \foreach \y in {1,3,5} {
        \begin{scope}[biclique,shift={(0,\y)},rotate=-90]
        \node[dot] at (0,0) {};
        \draw (-1+\e,0) -- (1-\e,0);
        \draw (0,0) -- (0,1-\e);
        \end{scope}
    }
    
    \begin{scope}[biclique,shift={(1,2)}]
    \node[dot] at (0,0) {};
    \draw (-1+\e,0) -- (1-\e,0);
    \draw (0,-1+\e) -- (0,1-\e);
    \end{scope}
    
    \begin{scope}[biclique,shift={(2,1)}]
    \node[dot] at (0,0) {};
    \draw (-1+\e,0) -- (1-\e,0);
    \draw (0,-1+\e) -- (0,1-\e);
    \end{scope}
    
    \begin{scope}[biclique,shift={(1,4)}]
    \node[dot] at (0,0) {};
    \draw (-1+\e,0) -- (1-\e,0);
    \draw (0,-1+\e) -- (0,1-\e);
    \end{scope}
    
    \begin{scope}[biclique,shift={(2,3)}]
    \node[dot] at (0,0) {};
    \draw (-1+\e,0) -- (1-\e,0);
    \draw (0,-1+\e) -- (0,1-\e);
    \end{scope}
    
    \begin{scope}[biclique,shift={(3,2)}]
    \node[dot] at (0,0) {};
    \draw (-1+\e,0) -- (1-\e,0);
    \draw (0,-1+\e) -- (0,1-\e);
    \end{scope}
    
    \begin{scope}[biclique,shift={(4,1)}]
    \node[dot] at (0,0) {};
    \draw (-1+\e,0) -- (1-\e,0);
    \draw (0,-1+\e) -- (0,1-\e);
    \end{scope}
    
    \begin{scope}[biclique,shift={(3,6)}]
    \node[dot] at (0,0) {};
    \draw (-1+\e,0) -- (1-\e,0);
    \draw (0,-1+\e) -- (0,1-\e);
    \end{scope}
    
    \begin{scope}[biclique,shift={(4,5)}]
    \node[dot] at (0,0) {};
    \draw (-1+\e,0) -- (1-\e,0);
    \draw (0,-1+\e) -- (0,1-\e);
    \end{scope}
    
    \begin{scope}[biclique,shift={(5,4)}]
    \node[dot] at (0,0) {};
    \draw (-1+\e,0) -- (1-\e,0);
    \draw (0,-1+\e) -- (0,1-\e);
    \end{scope}
    
    \begin{scope}[biclique,shift={(6,3)}]
    \node[dot] at (0,0) {};
    \draw (-1+\e,0) -- (1-\e,0);
    \draw (0,-1+\e) -- (0,1-\e);
    \end{scope}
    
    \begin{scope}[biclique,shift={(7,2)}]
    \node[dot] at (0,0) {};
    \draw (-1+\e,0) -- (1-\e,0);
    \draw (0,-1+\e) -- (0,1-\e);
    \end{scope}
    
    \begin{scope}[biclique,shift={(7,4)}]
    \node[dot] at (0,0) {};
    \draw (-1+\e,0) -- (1-\e,0);
    \draw (0,-1+\e) -- (0,1-\e);
    \end{scope}
    
    \begin{scope}[biclique,shift={(7,6)}]
    \node[dot] at (0,0) {};
    \draw (-1+\e,0) -- (1-\e,0);
    \draw (0,-1+\e) -- (0,1-\e);
    \end{scope}
    
    \begin{scope}[biclique,shift={(6,5)}]
    \node[dot] at (0,0) {};
    \draw (-1+\e,0) -- (1-\e,0);
    \draw (0,-1+\e) -- (0,1-\e);
    \end{scope}
    
    \begin{scope}[biclique,shift={(5,6)}]
    \node[dot] at (0,0) {};
    \draw (-1+\e,0) -- (1-\e,0);
    \draw (0,-1+\e) -- (0,1-\e);
    \end{scope}
    
    \draw [brace] (0+\w,-\h)-- (\p-\w,-\h) node [midway,yshift=-8mm] {$G_{8,8}$};
\end{scope}

\end{tikzpicture}}
\caption{Cover of $G_{8,17}$ of size $67$.}
\label{fig:grid_example_1}
\end{figure}

\begin{figure}[ht!]
\scalebox{.635}{\input{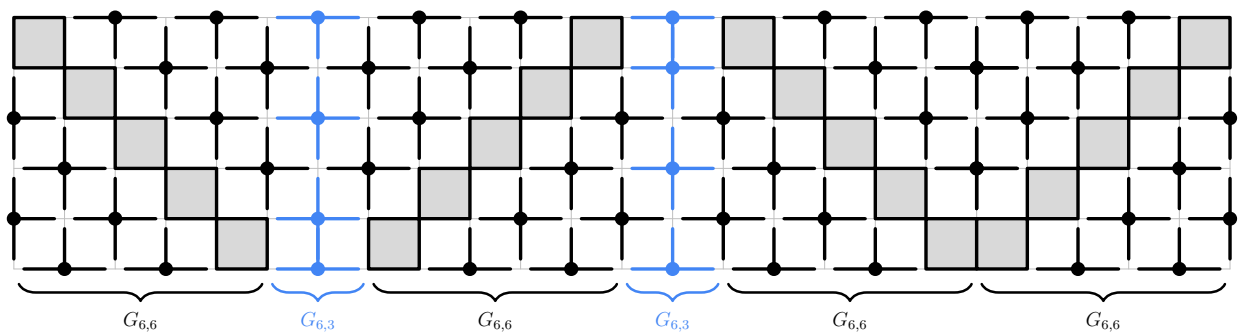}}
\caption{Cover of $G_{6,25}$ of size $74$.}
\label{fig:grid_example_2}
\end{figure}

We now establish that we may choose a cover with special properties, before completing the proof of the main theorem.  Recall that $\out$ is the outer cycle of $G_{p,q}$. Let $\mathcal C$ be a cover of $G_{p,q}$.  A \emph{boundary element} of $\mathcal{C}$ is an element of $\mathcal{C}$ containing at least one edge of $\out$.  A \emph{boundary $4$-cycle} is a boundary element that is a $4$-cycle and a \emph{boundary star} is a boundary element that is a star. 

\begin{lem}\label{lem:cover_props}
For every cover of $G_{p,q}$, there exists a cover $\mathcal{C}$ of the same size with the following properties:
\begin{enumerate}[label={\rm(\roman*)}]
    \item all boundary stars of $\mathcal{C}$ are pairwise edge-disjoint; \label{tag:item_1}
    \item no edge is contained in both a boundary $4$-cycle and a boundary star of $\mathcal{C}$.  \label{tag:item_2}
\end{enumerate}
\end{lem}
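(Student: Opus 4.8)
The plan is to reduce both \textup{(i)} and \textup{(ii)} to the single, cleaner condition that \emph{every edge of $\out$ is covered by exactly one element of the cover}, and then to realise such a cover by a local exchange argument. As a preliminary I would record that the only maximal bicliques of $G_{p,q}$ through a boundary edge $e=uw$ are the stars $S_u,S_w$ at its two endpoints and the unique unit $4$-cycle $Q_e$ bounded by $e$; this is immediate because $G_{p,q}$ is triangle-free and $e$ lies in exactly one square. In particular every boundary star is centred at a non-corner boundary vertex (a corner has degree $2$, so its star is not maximal), and such a star consists of exactly the two boundary edges at its centre together with one spoke, its edge into the interior.

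The core of the proof is the following implication, which I would prove for $p\ge 3$ (the degenerate cases $p\le 2$, where $\out$ has chords, are checked by hand): \emph{if every edge of $\out$ is covered exactly once by $\mathcal{C}$, then \textup{(i)} and \textup{(ii)} hold.} Two boundary stars can share only the boundary edge joining their centres, so covering that edge once forces \textup{(i)}, and the same statement forbids a boundary edge lying in both $Q_e$ and a star, giving the ``boundary-edge'' half of \textup{(ii)}. For the ``spoke'' half, suppose a spoke $uu''$ lies in a boundary star $S_u$. The two unit squares flanking $uu''$ each contain one of the two boundary edges at $u$, and $S_u$ already covers both of these; by the exactly-once hypothesis neither flanking square lies in $\mathcal{C}$, so the spoke $uu''$ meets no boundary $4$-cycle. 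This is precisely \textup{(ii)}.

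To construct a cover with the exactly-once property I would fix the size $s$ of the given cover and select, among all covers of size $s$, one minimising the boundary over-count $\Phi:=\sum_{e\in E(\out)}\bigl(d_{\mathcal{C}}(e)-1\bigr)\ge 0$, where $d_{\mathcal{C}}(e)$ is the number of elements of $\mathcal{C}$ through $e$. If $\Phi>0$, some boundary edge $e=uw$ is covered at least twice, and among its covering bicliques there is always a star, say $S_u$. Deleting $S_u$ leaves $e$ covered by a second biclique, so it suffices to re-cover the remaining edges of $S_u$ (its other boundary edge and its spoke) by the single unit square on the far side of $u$; this is a one-for-one swap of maximal bicliques that lowers $\Phi$, contradicting minimality. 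Alternatively, when the second coverer of $e$ is $Q_e$ one may instead delete $Q_e$ and recover its interior edges by an interior $4$-star, which carries no boundary edge at all.

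The step I expect to be the main obstacle is guaranteeing, in every over-covered configuration, a size-preserving move that \emph{strictly} lowers $\Phi$. Two points require care. First, the inserted biclique must be new, so that the swap does not shrink the cover: this follows because the deleted element carries a \emph{private} edge lying among the edges the new biclique recovers, forcing the latter to be absent; any genuinely redundant element can instead be deleted and re-parked as an unused interior biclique, preserving both the size and $\Phi$. Second, and most delicately, the recovering square may be a \emph{corner} square, which carries a second boundary edge and can cancel the gain in $\Phi$; this forces a careful choice of which coverer to delete near each corner, and the sides of length three (that is, the case $p=3$) need a dedicated rearrangement. The finitely many genuinely small grids, together with $p\le 2$, are verified directly.
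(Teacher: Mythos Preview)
Your second paragraph is correct and in fact proves an equivalence: once (i) and (ii) hold, no edge of $\out$ can be covered twice, since any two distinct maximal bicliques through a boundary edge must include at least one star, and then (i) or (ii) fails. So your reformulation is exactly the target, not a strengthening, and the ``spoke'' case of (ii) is handled cleanly.

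The genuine gap is the potential $\Phi$. On a side of length three---which means $p=4$, not $p=3$---let $S_u,S_w$ be the two boundary stars centred at the two non-corner vertices of that side, with $Q_e\notin\mathcal C$. Their shared edge is the middle edge of the side. Deleting either star forces you to insert a \emph{corner} square to recover the remaining boundary edge and spoke, and a corner square carries two edges of $\out$; hence the total boundary-incidence count $\sum_{f\in E(\out)}d_{\mathcal C}(f)$, and therefore $\Phi$, is unchanged. Your fallback ``delete $Q_e$, insert an interior $4$-star'' is unavailable here, and there is no $1$-for-$1$ swap on these two elements that strictly lowers $\Phi$. This is not a finite-exceptions phenomenon: it occurs on both short sides of every $G_{4,q}$, so ``careful choice of which coverer to delete'' cannot resolve it.

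The paper's remedy is to change the potential, not the moves. It performs essentially your swap---replace an offending boundary star by the adjacent square---but tracks the \emph{number of boundary stars}, which drops by one at every application regardless of whether the inserted square sits at a corner. Termination is then immediate, and when neither replacement applies, (i) and (ii) hold by definition. If you trade $\Phi$ for the boundary-star count, your argument goes through with no corner analysis and no special treatment of short sides.
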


\begin{proof}
If two boundary stars meet in an edge, we replace them with a star and a $4$-cycle which cover a superset of edges, as in Figure~\ref{fig:2_stars}. 
If a boundary $4$-cycle and a boundary star are not edge-disjoint, then we replace them with two $4$-cycles which cover a superset of edges, as in Figure~\ref{fig:cube_star}.
\begin{figure}[ht!]
\begin{subfigure}{.4\textwidth}\centering
	\scalebox{.8}{\begin{tikzpicture}[scale=.85]
\tikzset{dot/.style = {draw,fill,circle, inner sep = 2pt},
frame/.style = {draw=lightgray,line width=.5pt},
biclique/.style = {draw,line width=2pt}}

    \def\e{.2}
    \def\w{.3}
    \def\p{2}
    \def\q{5}

    \foreach \x in {0,...,\q} {
        \draw [frame] (\x,-\w) -- (\x,\p);
    }

    \foreach \y in {0,...,\p} {
        \draw [frame] (-\w,\y) -- (\q+\w,\y);
    }
    
    \begin{scope}[biclique,shift={(2,2)},rotate=180]
    \node[dot] at (0,0) {}; 
    \draw (-1+\e,0) -- (1-\e,0);
    \draw (0,0) -- (0,1-\e);
    \end{scope}
    
    \begin{scope}[biclique,shift={(3,2)},rotate=180]
    \node[dot] at (0,0) {}; 
    \draw (-1+\e,0) -- (1-\e,0);
    \draw (0,0) -- (0,1-\e);
    \end{scope}

\end{tikzpicture}}
	\caption{}
	\label{fig:2_stars_1}
\end{subfigure}\hspace{1cm}
\begin{subfigure}{.4\textwidth}\centering
	\scalebox{.8}{\begin{tikzpicture}[scale=.85]
\tikzset{dot/.style = {draw,fill,circle, inner sep = 2pt},
frame/.style = {draw=lightgray,line width=.5pt},
biclique/.style = {draw,line width=2pt}}

    \def\e{.2}
    \def\w{.3}
    \def\p{2}
    \def\q{5}

    \foreach \x in {0,...,\q} {
        \draw [frame] (\x,-\w) -- (\x,\p);
    }

    \foreach \y in {0,...,\p} {
        \draw [frame] (-\w,\y) -- (\q+\w,\y);
    }
    

    \begin{scope}[biclique,shift={(2,\p)},rotate=180]
    \node[dot] at (0,0) {}; 
    \draw (-1+\e,0) -- (1-\e,0);
    \draw (0,0) -- (0,1-\e);
    \end{scope}
    
    \begin{scope}[biclique,shift={(3,\p-1)}]
    \draw[fill=gray!30] (0,0) -- (1,0) -- (1,1) -- (0,1) -- cycle;
    \end{scope}

\end{tikzpicture}}
	\caption{}
	\label{fig:2_stars_2}
\end{subfigure}
\caption{}
\label{fig:2_stars}
\end{figure}

\begin{figure}[ht!]
\begin{subfigure}{.4\textwidth}\centering
	\scalebox{.8}{\begin{tikzpicture}[scale=.85]
\tikzset{dot/.style = {draw,fill,circle, inner sep = 2pt},
frame/.style = {draw=lightgray,line width=.5pt},
biclique/.style = {draw,line width=2pt}}

    \def\e{.2}
    \def\w{.3}
    \def\p{2}
    \def\q{6}

    \foreach \x in {0,...,\q} {
        \draw [frame] (\x,-\w) -- (\x,\p);
    }

    \foreach \y in {0,...,\p} {
        \draw [frame] (-\w,\y) -- (\q+\w,\y);
    }
    
    \begin{scope}[biclique,shift={(2,\p-1)}]
    \draw[fill=gray!30] (0,0) -- (1,0) -- (1,1) -- (0,1) -- cycle;
    \end{scope}

    \begin{scope}[biclique,shift={(3,2)}]
    \node [dot] at (0,0) {};
    \draw (0,-1+\e) -- (0,0);
    \draw (-1+\e,0) -- (1-\e,0);
    \end{scope}

\end{tikzpicture}}
	\caption{}
	\label{fig:cube_star_1}
\end{subfigure}\hspace{1cm}
\begin{subfigure}{.4\textwidth}\centering
	\scalebox{.8}{\begin{tikzpicture}[scale=.85]
\tikzset{dot/.style = {draw,fill,circle, inner sep = 2pt},
frame/.style = {draw=lightgray,line width=.5pt},
biclique/.style = {draw,line width=2pt}}

    \def\e{.2}
    \def\w{.3}
    \def\p{2}
    \def\q{6}

    \foreach \x in {0,...,\q} {
        \draw [frame] (\x,-\w) -- (\x,\p);
    }

    \foreach \y in {0,...,\p} {
        \draw [frame] (-\w,\y) -- (\q+\w,\y);
    }
    
    \foreach \x in {2,3} {
        \begin{scope}[biclique,shift={(\x,\p-1)}]
        \draw[fill=gray!30] (0,0) -- (1,0) -- (1,1) -- (0,1) -- cycle;
        \end{scope}
    }

\end{tikzpicture}}
	\caption{}
	\label{fig:cube_star_2}
\end{subfigure}
\caption{}
\label{fig:cube_star}
\end{figure}
By repeatedly performing these two replacement rules, we eventually obtain a cover $\mathcal{C}$ satisfying~\ref{tag:item_1} and~\ref{tag:item_2}.
\end{proof}

We now complete the proof of Theorem~\ref{thm:main} by establishing the following converse to Lemma~\ref{lem:constructions}.
\begin{lem}\label{lem:lower_bound}
Let $1 \le p \le q$, with $p$ even. If $G_{p,q}$ has a cover of size $\nicefrac{pq}{2}-1$, then $q-1 = k (p-1) + 2 \ell$ for some integers $0 \le \ell < k$.
\end{lem}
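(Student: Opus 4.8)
The plan is to exploit the fact that a cover of size $\nicefrac{pq}{2}-1$ makes the lower bound of Lemma~\ref{lem:lower} \emph{tight}, which pins down the cover's interaction with the edge set $S = S(G_{p,q})$ almost completely. Recall from the proof of Lemma~\ref{lem:lower} that, for $p$ even, $|S| = pq - 2$ and every biclique meets $S$ in at most two edges. Hence if $\mathcal{C}$ is a cover with $|\mathcal{C}| = \nicefrac{pq}{2} - 1$, then
\[
|S| \le \sum_{C \in \mathcal{C}} |E(C) \cap S| \le 2|\mathcal{C}| = pq - 2 = |S|,
\]
so every element of $\mathcal{C}$ contains \emph{exactly} two edges of $S$, and these pairs \emph{partition} $S$ (no edge of $S$ is covered twice). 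By Lemma~\ref{lem:cover_props} we may further assume $\mathcal{C}$ has pairwise edge-disjoint boundary stars and no edge shared between a boundary $4$-cycle and a boundary star. I would then record the short list of ways a single maximal biclique can contain two $S$-edges: a $C_4$ must use either two opposite (parallel) sides of a unit square or two sides meeting at a rectangle corner, while a star must use two of the edges incident to its centre.

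Next I would translate this partition into a tiling problem. The set $S$ consists of the $\nicefrac{p}{2}-1$ nested rectangular cycles together with the horizontal edges of the central $2 \times (q-p+2)$ strip (Figure~\ref{fig:p_even}). Viewing each horizontal edge of the grid as a cell indexed by its row and by the column-gap it crosses, the horizontal edges of $S$ occupy a symmetric hexagonal region: row $r$ contributes exactly the gaps $c$ with $d(r) \le c \le q - d(r)$, where $d(r) = \min(r,\,p+1-r)$. Under the forced pairing, a $C_4$ joins two cells in the same gap and adjacent rows (a vertical domino), while a star joins two cells in the same row and adjacent gaps (a horizontal domino); thus, away from the left and right ends, the partition of the horizontal $S$-edges is exactly a domino tiling of this hexagonal region. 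The vertical edges of $S$, which sit in the $\nicefrac{p}{2}-1$ leftmost and rightmost columns, together with the mixed ``corner'' pairings (a horizontal with a vertical $S$-edge), are confined to the two ends by the normalisation from Lemma~\ref{lem:cover_props}, and I would treat them using the left--right symmetry of $S$.

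Finally I would extract the arithmetic. The staircase boundary of the hexagonal region forces the tiling to be essentially rigid along its diagonals: the diagonal cells of a $p \times p$ sub-block can only be covered by a chain of $C_4$'s, which is precisely the diagonal of $4$-cycles appearing in the covers of $G_{p,p}$ in Figure~\ref{fig:grid-6_6_blocks}. Reading these forced diagonal $C_4$-chains from left to right partitions the $q-1$ gaps into $k$ full blocks of gap-extent $p-1$ (each a $G_{p,p}$) separated by $\ell$ slabs of gap-extent $2$ (each the interior of a $G_{p,3}$), giving $q - 1 = k(p-1) + 2\ell$; the requirement that every slab lie strictly between two blocks forces $0 \le \ell < k$, exactly inverting the construction of Lemma~\ref{lem:constructions}. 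The main obstacle is precisely this rigidity step: one must rule out all ``irregular'' partitions of $S$ --- in particular bulk pairings that mix a horizontal with a vertical $S$-edge, and $C_4$'s that straddle two distinct nested rectangles --- so that only the block-structured tilings survive, and then verify that these surviving tilings are in bijection with the decompositions $q-1 = k(p-1)+2\ell$ with $0 \le \ell < k$.
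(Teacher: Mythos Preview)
Your opening observation is correct and attractive: since $|S(G_{p,q})| = pq-2$ for $p$ even and every biclique meets $S$ in at most two edges, a cover of size $\tfrac{pq}{2}-1$ forces every biclique to pick up exactly two $S$-edges, and these pairs partition $S$. This is a genuinely different entry point from the paper's argument, which instead tracks the \emph{waste} $w(\mathcal{C})=4|\mathcal{C}|-|E(G_{p,q})|=p+q-4$ and analyses how that slack is spent on boundary stars, thick edges, and fences. Your $S$-partition is in some sense the ``dual'' tightness statement.

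However, the proposal has a real gap exactly where you flag it. The claim that the staircase boundary of your hexagonal region makes the domino tiling ``essentially rigid along its diagonals'' is not justified, and is not true for the horizontal $S$-edges in isolation: hexagonal regions of this shape have many domino tilings (already for $p=4$ the middle strip is two rows of equal length, which admits Fibonacci-many tilings). What actually pins the structure down is the interaction with the rest of the cover --- the non-$S$ edges must also be covered, and the bicliques realising your $S$-pairs are not always uniquely determined by their two $S$-edges (at each corner of an inner rectangle, both the corner $C_4$ and the star centred there realise the same horizontal/vertical $S$-pair but cover different non-$S$ edges). Your proposal does not use the covering constraint on non-$S$ edges at all, and the attribution of the end behaviour to Lemma~\ref{lem:cover_props} is off: that lemma only normalises elements touching the outer cycle, not inner rectangles; the mixed pairings are confined to the ends simply because that is where the vertical $S$-edges live.

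By contrast, the paper's proof does carry out the missing structural step. It shows (via Claims~\ref{clm:containingthick}--\ref{clm:partition}) that the tight waste forces all fences to have size $1$ or $2$, that every staircase contains a thick edge, and that the staircases organise into pyramids and double staircases with very constrained lengths. That rigidity is then read off column by column to obtain $q-1=k(p-1)+2\ell$. If you want to push your $S$-partition approach through, you will need an analogue of this propagation argument --- for instance, showing that the perfect matching on $S$-edges, \emph{together with} the requirement that the induced bicliques cover every non-$S$ edge, forces the diagonal $C_4$-chains you describe. As written, the proposal identifies a promising invariant but stops short of the structural argument that does the real work.
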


\begin{proof}
The lemma clearly holds if $p=2$, so we may assume $p \geq 4$.  Let $\mathcal{C}$ be a cover of $G_{p,q}$ of size $\nicefrac{pq}{2}-1$ satisfying properties~\ref{tag:item_1} and~\ref{tag:item_2} of Lemma \ref{lem:cover_props}.  

We begin by defining some objects required for the proof.  Let $\out$ be the outer cycle of $G_{p,q}$.  The \emph{corners} of $G_{p,q}$ are the vertices $(1,1)$, $(1,p)$, $(q,1)$, and $(q,p)$.   Let $H$ be the subgraph of $G_{p,q}$ induced by the edges of the boundary $4$-cycles of $\mathcal C$.  A \emph{fence} is a connected component of $H$.  The \emph{size} of a fence is the number of boundary $4$-cycles it contains.  A \emph{link} is a connected component of $\out \setminus E(H)$ containing at least two vertices and no corners.  See Figure~\ref{fig:links} for an illustration of fences and links. 

Suppose $\mathsf{L}$ is a link contained on the topmost path of $G_{p,q}$.  Let $\mathsf{L}_{\mathsf {left}}$ be the path starting from the left end of $\mathsf{L}$ by first proceeding down twice and then alternating between proceeding right and down.  We define $\mathsf{L}_{\mathsf{right}}$ by interchanging left and right in the definition of $\mathsf{L}_{\mathsf{left}}$.
The \emph{staircase} generated by $\mathsf{L}$ is the subgraph of $G_{p,q}$ contained in the region bounded by $\mathsf{L}$, $\mathsf{L}_{\mathsf{left}}$, $\mathsf{L}_{\mathsf {right}}$,  and (possibly) a subpath of the bottommost path of $G_{p,q}$, see Figure~\ref{fig:staircase_a} and  Figure~\ref{fig:staircase_b}.   Staircases for links contained in the leftmost, rightmost, and bottommost paths of $G_{p,q}$ are defined by rotating $G_{p,q}$ so that the link is on the topmost path, applying the above definition, and then rotating back.  

The \emph{length} of a path is its number of edges, and the \emph{length} of a staircase is the length of the link that generates it.  An edge is \emph{thick} if it is covered by at least two bicliques in $\mathcal{C}$. A \emph{pyramid} $\mathsf{P}$ is a staircase of length $2p-4$ containing a size-$2$ fence $\mathsf{B}$ such that the thick edge in $\mathsf{B}$ is the only thick edge in $\mathsf{P}$, see Figure~\ref{fig:pyramid}. We call $\mathsf{B}$ the \emph{tip} of the pyramid. 

\begin{figure}[ht!]
\scalebox{.6}{\begin{tikzpicture}[scale=.8]
\tikzset{frame/.style = {draw=lightgray,line width=.5pt},
biclique/.style = {draw,line width=2pt},
link/.style = {draw=lightblue,line width=2.25pt},
}
    
    \def\e{.2}
    \def\u{.15}
    \def\w{.4}
    \def\p{9}
    \def\q{14}

    \foreach \x in {0,...,\q} {
        \draw [frame] (\x,0) -- (\x,\p);
    }

    \foreach \y in {0,...,\p} {
        \draw [frame] (0,\y) -- (\q,\y);
    }

    \foreach \x in {2,3,4,\q-3,\q-2,\q-1} {
        \begin{scope}[biclique,shift={(\x,\p-1)}]
            \draw[fill=gray!30] (0,0) -- (1,0) -- (1,1) -- (0,1) -- cycle;
        \end{scope}
    }
    
    \draw[link] (5+\u,\p) -- (\q-3-\u,\p);

    \foreach \x in {0,1,2,3,8,9} {
        \begin{scope}[biclique,shift={(\x,0)}]
            \draw[fill=gray!30] (0,0) -- (1,0) -- (1,1) -- (0,1) -- cycle;
        \end{scope}
    }
    
    \draw[link] (4+\u,0) -- (8-\u,0);
    
    \foreach \y in {2,3,6,7} {
        \begin{scope}[biclique,shift={(\q-1,\y)}]
            \draw[fill=gray!30] (0,0) -- (1,0) -- (1,1) -- (0,1) -- cycle;
        \end{scope}
    }
    
    \draw[link] (\q,4+\u) -- (\q,6-\u);
    
    \foreach \y in {3,4} {
        \begin{scope}[biclique,shift={(0,\y)}]
            \draw[fill=gray!30] (0,0) -- (1,0) -- (1,1) -- (0,1) -- cycle;
        \end{scope}
    }
    
    \draw[link] (0,1+\u) -- (0,3-\u);

\end{tikzpicture}}
\caption{Fences are shaded. Links are shown in blue.}
\label{fig:links}
\end{figure}

\begin{figure}[ht!]
\begin{subfigure}[t]{.65\textwidth}\centering
	\scalebox{.6}{\begin{tikzpicture}[scale=1.05]
\tikzset{dot/.style = {draw,fill,circle, inner sep = 2.25pt},
frame/.style = {draw=lightgray,line width=.5pt},
biclique/.style = {draw,line width=2pt},
}
    
    \def\e{.2}
    \def\u{.1}
    \def\w{.4}
    \def\p{5}
    \def\q{13}

    \foreach \x in {0,...,\q} {
        \draw [frame] (\x,0) -- (\x,\p);
    }

    \foreach \y in {0,...,\p} {
        \draw [frame] (-\w,\y) -- (\q+\w,\y);
    }

    \begin{scope}[biclique,shift={(1,\p-1)}]
    \draw[fill=gray!30] (0,0) -- (1,0) -- (1,1) -- (0,1) -- cycle;
    \end{scope}
    \begin{scope}[biclique,shift={(0,\p-1)}]
    \draw[fill=gray!30] (0,0) -- (1,0) -- (1,1) -- (0,1) -- cycle;
    \end{scope}

    \begin{scope}[biclique,shift={(12,\p-1)}]
    \draw[fill=gray!30] (0,0) -- (1,0) -- (1,1) -- (0,1) -- cycle;
    \end{scope}
    

    \draw[lightblue,line width=2pt]
        (2,\p) -- 
        (2,\p-2)--(3,\p-2)--
        (3,\p-3)--(4,\p-3)--
        (4,\p-4)--(5,\p-4)--
        (5,\p-5)--(9,\p-5)--
        (9,\p-4)--(10,\p-4)--
        (10,\p-3)--(11,\p-3)--
        (11,\p-2)--(12,\p-2)--
        (12,\p) -- cycle;
        
    \draw[lightblue,line width=2pt] (3,\p)--(3,\p-2)--(11,\p-2)--(11,\p);
    
    \draw[lightblue,line width=2pt] (4,\p)--(4,\p-3)--(10,\p-3)--(10,\p);
    
    \draw[lightblue,line width=2pt] (5,\p)--(5,\p-4)--(9,\p-4)--(9,\p);
    
    \draw[lightblue,line width=2pt] (2,\p-1)--(12,\p-1);
    
    \draw[lightblue,line width=2pt] (6,\p)--(6,\p-5);
    \draw[lightblue,line width=2pt] (7,\p)--(7,\p-5);
    \draw[lightblue,line width=2pt] (8,\p)--(8,\p-5);
    
\end{tikzpicture}}
	\caption{The staircase generated by a link of length 10 is shown in blue. In this case, the staircase includes part of the bottommost path of the grid.}
	\label{fig:staircase_a}
\end{subfigure}\hspace{3mm}
\begin{subfigure}[t]{.3\textwidth}\centering
	\scalebox{.6}{\begin{tikzpicture}[scale=1.05]
\tikzset{dot/.style = {draw,fill,circle, inner sep = 2.25pt},
frame/.style = {draw=lightgray,line width=.5pt},
biclique/.style = {draw,line width=2pt},
}
    
    \def\e{.2}
    \def\w{.4}
    \def\p{5}
    \def\q{6}

    \foreach \x in {0,...,\q} {
        \draw [frame] (\x,0) -- (\x,\p);
    }

    \foreach \y in {0,...,\p} {
        \draw [frame] (-\w,\y) -- (\q+\w,\y);
    }

    \begin{scope}[biclique,shift={(0,\p-1)}]
    \draw[fill=gray!30] (0,0) -- (1,0) -- (1,1) -- (0,1) -- cycle;
    \end{scope}

    \begin{scope}[biclique,shift={(5,\p-1)}]
    \draw[fill=gray!30] (0,0) -- (1,0) -- (1,1) -- (0,1) -- cycle;
    \end{scope}
    
    \draw[lightblue,line width=2pt]
        (1,\p) -- 
        (1,\p-2)--(2,\p-2)--
        (2,\p-3)--(4,\p-3)--
        (4,\p-2)--(5,\p-2)--
        (5,\p)--cycle;
    
    \draw[lightblue,line width=2pt] (2,\p)--(2,\p-2)--(4,\p-2)--(4,\p);
    
    \draw[lightblue,line width=2pt] (3,\p)--(3,\p-3);
    
    \draw[lightblue,line width=2pt] (1,\p-1)--(5,\p-1);
\end{tikzpicture}}
	\caption{The staircase generated by a link of length 4 is shown in blue.}
	\label{fig:staircase_b}
\end{subfigure}
\caption{}
\label{fig:staircases}
\end{figure}

\begin{figure}[ht!]\centering
\scalebox{.6}{\begin{tikzpicture}[scale=1.05]
\tikzset{dot/.style = {draw,fill,circle, inner sep = 2.25pt},
frame/.style = {draw=lightgray,line width=.5pt},
biclique/.style = {draw,line width=2pt},
}
    
    \def\e{.2}
    \def\u{.1}
    \def\w{.4}
    \def\p{5}
    \def\q{11}
    
    \clip(-\w,-\w) rectangle (\q+\w,\p+\w);

    \foreach \x in {0,...,\q} {
        \draw [frame] (\x,0) -- (\x,\p);
    }

    \foreach \y in {0,...,\p} {
        \draw [frame] (-\w,\y) -- (\q+\w,\y);
    }

    \begin{scope}[biclique,shift={(0,\p-1)}]
    \draw[fill=gray!30] (0,0) -- (1,0) -- (1,1) -- (0,1) -- cycle;
    \end{scope}
    \begin{scope}[biclique,shift={(9,\p-1)}]
    \draw[fill=gray!30] (0,0) -- (1,0) -- (1,1) -- (0,1) -- cycle;
    \end{scope}

    \begin{scope}[biclique,shift={(10,\p-1)}]
    \draw[fill=gray!30] (0,0) -- (1,0) -- (1,1) -- (0,1) -- cycle;
    \end{scope}
    

    \draw[lightblue,line width=2pt]
        (1,\p) -- 
        (1,\p-2)--(2,\p-2)--
        (2,\p-3)--(3,\p-3)--
        (3,\p-4)--(4,\p-4)--
        (4,\p-5)--(6,\p-5)--
        (6,\p-4)--(7,\p-4)--
        (7,\p-3)--(8,\p-3)--
        (8,\p-2)--(9,\p-2)--
        (9,\p) -- cycle;
        
    \draw[lightblue,line width=2pt] (2,\p)--(2,\p-2)--(8,\p-2)--(8,\p);
    
    \draw[lightblue,line width=2pt] (3,\p)--(3,\p-3)--(7,\p-3)--(7,\p);
    
    \draw[lightblue,line width=2pt] (4,\p)--(4,\p-4)--(6,\p-4)--(6,\p);
    
    \draw[lightblue,line width=2pt] (1,\p-1)--(9,\p-1);
    
    \draw[lightblue,line width=2pt] (5,\p)--(5,\p-5);

    \begin{scope}[biclique,shift={(4,0)},darkblue]
    \draw[fill=darkblue!30] (0,0) -- (1,0) -- (1,1) -- (0,1) -- cycle;
    \end{scope}
    
    \begin{scope}[biclique,shift={(5,0)},darkblue]
    \draw[fill=darkblue!30] (0,0) -- (1,0) -- (1,1) -- (0,1) -- cycle;
    \end{scope}
    
    \draw[red,line width=3pt](5,.05)--(5,1-.05);
\end{tikzpicture}}
\caption{A pyramid and its tip (in dark blue). The red edge is the only thick edge.}
\label{fig:pyramid}
\end{figure}
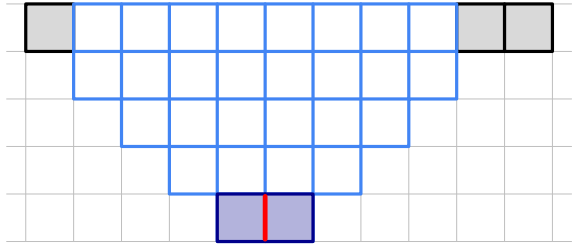

\begin{clm} \label{clm:containingthick}
Every staircase contains at least one thick edge.
\end{clm}

\begin{subproof}
Let $\mathsf{L}$ be a link of length  $k$ connecting two fences $\mathsf{B}$ and $\mathsf{B}'$ (see Figure~\ref{fig:staircase_0}) and let $\mathsf{S}$ be the staircase generated by $\mathsf{L}$.  Towards a contradiction, suppose $\mathsf{S}$ does not contain a thick edge.  Since $\mathcal{C}$ satisfies properties~\ref{tag:item_1} and~\ref{tag:item_2} of Lemma~\ref{lem:cover_props},  $k$ is even and $P$ is covered by exactly $k/2$ boundary stars as in Figure~\ref{fig:staircase_1}. Let $E_1$ be the set of edges in $\mathsf{S}$ with one end on $\mathsf{L}$ and not covered by a boundary element of $\mathcal{C}$ (blue edges in Figure~\ref{fig:staircase_1}).  Each edge $e$ in $E_1$ must be covered by a $K_{1,4}$, else $e$ is thick, see Figure~\ref{fig:staircase_2}.  Let $e_1$ and $e_2$ be the two horizontal edges of $\mathsf{S}$ which intersect $(V(\mathsf{B}) \cup V(\mathsf{B}')) \setminus V(\mathsf{L})$ (blue edges in Figure~\ref{fig:staircase_2}).  Since neither $e_1$ nor $e_2$ are thick, they must be covered by $4$-cycles as in Figure~\ref{fig:staircase_3}.  
By repeating this argument, we either find a thick edge (red edge in Figure~\ref{fig:staircase_4}), or we reach the bottommost path of $G_{p,q}$.  Let $F$ be the set of edges $f \in E(\mathsf S)$ such that $f$ is covered by a $K_{1,4}$ of $\mathcal{C}$ and $f$ has exactly one endpoint on the bottommost path of $G_{p,q}$.  By the above argument, $F$ is non-empty. Moreover, each edge in $F$ is thick (see Figure~\ref{fig:staircase_5}).
\end{subproof}

\begin{figure}[ht!]\centering
\begin{subfigure}{.32\textwidth}\centering
	\scalebox{.69}{\begin{tikzpicture}[scale=.85]
\tikzset{dot/.style = {draw,fill,circle, inner sep = 1.95pt},
frame/.style = {draw=lightgray,line width=.5pt},
biclique/.style = {draw,line width=2pt},
}
    
    \def\e{.2}
    \def\w{.1}
    \def\l{.4}
    \def\p{5}
    \def\q{8}
    
    \clip(-\l,-\l) rectangle (\q+\l,\p+3.2*\l);
    
    \foreach \x in {1,2,3} {
        \fill[gray!10] (\x,\p) -- (\q-\x,\p) -- (\q-\x,\p-\x-1) -- (\x,\p-\x-1) -- cycle;
    }
    
    \foreach \x in {0,...,\q} {
        \draw [frame] (\x,-\l) -- (\x,\p);
    }

    \foreach \y in {0,...,\p} {
        \draw [frame] (-\l,\y) -- (\q+\l,\y);
    }

    \begin{scope}[biclique,shift={(0,\p-1)}]
    \draw[fill=gray!30] (0,0) -- (1,0) -- (1,1) -- (0,1) -- cycle;
    \node[above] at (.5,1) {$\mathsf{B}$};
    \end{scope}

    \begin{scope}[biclique,shift={(\q-1,\p-1)}]
    \draw[fill=gray!30] (0,0) -- (1,0) -- (1,1) -- (0,1) -- cycle;
    \node[above] at (.5,1) {$\mathsf{B}'$};
    \end{scope}
    
    \draw[lightblue,line width=2pt] (1+\w,\p) -- (\q-1-\w,\p) node [midway,above] {$\mathsf{L}$};

\end{tikzpicture}}
	\caption{}
	\label{fig:staircase_0}
\end{subfigure}\hspace{2mm}
\begin{subfigure}{.32\textwidth}\centering
	\scalebox{.69}{\begin{tikzpicture}[scale=.85]
\tikzset{dot/.style = {draw,fill,circle, inner sep = 1.95pt},
frame/.style = {draw=lightgray,line width=.5pt},
biclique/.style = {draw,line width=2pt},
}
    
    \def\e{.2}
    \def\w{.1}
    \def\l{.4}
    \def\p{5}
    \def\q{8}
    
    \clip(-\l,-\l) rectangle (\q+\l,\p+3.2*\l);
    
    \foreach \x in {1,2,3} {
        \fill[gray!10] (\x,\p) -- (\q-\x,\p) -- (\q-\x,\p-\x-1) -- (\x,\p-\x-1) -- cycle;
    }
    
    \foreach \x in {0,...,\q} {
        \draw [frame] (\x,-\l) -- (\x,\p);
    }

    \foreach \y in {0,...,\p} {
        \draw [frame] (-\l,\y) -- (\q+\l,\y);
    }

    \begin{scope}[biclique,shift={(0,\p-1)}]
    \draw[fill=gray!30] (0,0) -- (1,0) -- (1,1) -- (0,1) -- cycle;
    \end{scope}

    \begin{scope}[biclique,shift={(\q-1,\p-1)}]
    \draw[fill=gray!30] (0,0) -- (1,0) -- (1,1) -- (0,1) -- cycle;
    \end{scope}
    
    \foreach \x in {1,2,3} {
        \begin{scope}[biclique,shift={(2*\x,\p)},rotate=180]
        \node[dot] at (0,0) {};
        \draw (-1+\e,0) -- (1-\e,0);
        \draw (0,0) -- (0,1-\e);
        \end{scope}
    }
    
    \foreach \x in {0,2} {
        \draw[line width=2pt,lightblue] (3+\x,\p-1+\w) -- (3+\x,\p-\w);
    }

\end{tikzpicture}}
	\caption{}
	\label{fig:staircase_1}
\end{subfigure}\hspace{2mm}
\begin{subfigure}{.32\textwidth}\centering
	\scalebox{.69}{\begin{tikzpicture}[scale=.85]
\tikzset{dot/.style = {draw,fill,circle, inner sep = 1.95pt},
frame/.style = {draw=lightgray,line width=.5pt},
biclique/.style = {draw,line width=2pt},
}
    
    \def\e{.2}
    \def\w{.1}
    \def\l{.4}
    \def\p{5}
    \def\q{8}
    
    \clip(-\l,-\l) rectangle (\q+\l,\p+3.2*\l);
    
    \foreach \x in {1,2,3} {
        \fill[gray!10] (\x,\p) -- (\q-\x,\p) -- (\q-\x,\p-\x-1) -- (\x,\p-\x-1) -- cycle;
    }
    
    \foreach \x in {0,...,\q} {
        \draw [frame] (\x,-\l) -- (\x,\p);
    }

    \foreach \y in {0,...,\p} {
        \draw [frame] (-\l,\y) -- (\q+\l,\y);
    }

    \begin{scope}[biclique,shift={(0,\p-1)}]
    \draw[fill=gray!30] (0,0) -- (1,0) -- (1,1) -- (0,1) -- cycle;
    \end{scope}

    \begin{scope}[biclique,shift={(\q-1,\p-1)}]
    \draw[fill=gray!30] (0,0) -- (1,0) -- (1,1) -- (0,1) -- cycle;
    \end{scope}
    
    \foreach \x in {1,2,3} {
        \begin{scope}[biclique,shift={(2*\x,\p)},rotate=180]
        \node[dot] at (0,0) {};
        \draw (-1+\e,0) -- (1-\e,0);
        \draw (0,0) -- (0,1-\e);
        \end{scope}
    }
    
    \foreach \x in {0,2} {
        \begin{scope}[biclique,shift={(3+\x,\p-1)}]
        \node[dot] at (0,0) {};
        \draw (-1+\e,0) -- (1-\e,0);
        \draw (0,-1+\e) -- (0,1-\e);
        \end{scope}
    }
    
    \foreach \x in {0,1} {
        \draw[line width=2pt,lightblue] (5*\x+1+\w,\p-1) -- (5*\x+2-\w,\p-1);
    }

\end{tikzpicture}}
	\caption{}
	\label{fig:staircase_2}
\end{subfigure}\\[3mm]
\begin{subfigure}{.32\textwidth}\centering
	\scalebox{.69}{\begin{tikzpicture}[scale=.85]
\tikzset{dot/.style = {draw,fill,circle, inner sep = 1.95pt},
frame/.style = {draw=lightgray,line width=.5pt},
biclique/.style = {draw,line width=2pt},
}
    
    \def\e{.2}
    \def\w{.1}
    \def\l{.4}
    \def\p{5}
    \def\q{8}
    
    \clip(-\l,-\l) rectangle (\q+\l,\p+3.2*\l);
    
    \foreach \x in {1,2,3} {
        \fill[gray!10] (\x,\p) -- (\q-\x,\p) -- (\q-\x,\p-\x-1) -- (\x,\p-\x-1) -- cycle;
    }
    
    \foreach \x in {0,...,\q} {
        \draw [frame] (\x,-\l) -- (\x,\p);
    }

    \foreach \y in {0,...,\p} {
        \draw [frame] (-\l,\y) -- (\q+\l,\y);
    }

    \begin{scope}[biclique,shift={(0,\p-1)}]
    \draw[fill=gray!30] (0,0) -- (1,0) -- (1,1) -- (0,1) -- cycle;
    \end{scope}

    \begin{scope}[biclique,shift={(\q-1,\p-1)}]
    \draw[fill=gray!30] (0,0) -- (1,0) -- (1,1) -- (0,1) -- cycle;
    \end{scope}
    
    \foreach \x in {1,2,3} {
        \begin{scope}[biclique,shift={(2*\x,\p)},rotate=180]
        \node[dot] at (0,0) {};
        \draw (-1+\e,0) -- (1-\e,0);
        \draw (0,0) -- (0,1-\e);
        \end{scope}
    }
    
    \foreach \x in {0,2} {
        \begin{scope}[biclique,shift={(3+\x,\p-1)}]
        \node[dot] at (0,0) {};
        \draw (-1+\e,0) -- (1-\e,0);
        \draw (0,-1+\e) -- (0,1-\e);
        \end{scope}
    }
    
    \foreach \x in {1,6} {
        \begin{scope}[biclique,shift={(\x,\p-2)}]
            \draw[fill=gray!30] (0,0) -- (1,0) -- (1,1) -- (0,1) -- cycle;
        \end{scope}
    }

\end{tikzpicture}}
	\caption{}
	\label{fig:staircase_3}
\end{subfigure}\hspace{2mm}
\begin{subfigure}{.32\textwidth}\centering
	\scalebox{.69}{\begin{tikzpicture}[scale=.85]
\tikzset{dot/.style = {draw,fill,circle, inner sep = 1.95pt},
frame/.style = {draw=lightgray,line width=.5pt},
biclique/.style = {draw,line width=2pt},
}
    
    \def\e{.2}
    \def\w{.1}
    \def\l{.4}
    \def\p{5}
    \def\q{8}
    
    \clip(-\l,-\l) rectangle (\q+\l,\p+3.2*\l);
    
    \foreach \x in {1,2,3} {
        \fill[gray!10] (\x,\p) -- (\q-\x,\p) -- (\q-\x,\p-\x-1) -- (\x,\p-\x-1) -- cycle;
    }
    
    \foreach \x in {0,...,\q} {
        \draw [frame] (\x,-\l) -- (\x,\p);
    }

    \foreach \y in {0,...,\p} {
        \draw [frame] (-\l,\y) -- (\q+\l,\y);
    }

    \begin{scope}[biclique,shift={(0,\p-1)}]
    \draw[fill=gray!30] (0,0) -- (1,0) -- (1,1) -- (0,1) -- cycle;
    \end{scope}

    \begin{scope}[biclique,shift={(\q-1,\p-1)}]
    \draw[fill=gray!30] (0,0) -- (1,0) -- (1,1) -- (0,1) -- cycle;
    \end{scope}
    
    \foreach \x in {1,2,3} {
        \begin{scope}[biclique,shift={(2*\x,\p)},rotate=180]
        \node[dot] at (0,0) {};
        \draw (-1+\e,0) -- (1-\e,0);
        \draw (0,0) -- (0,1-\e);
        \end{scope}
    }
    
    \foreach \x in {0,2} {
        \begin{scope}[biclique,shift={(3+\x,\p-1)}]
        \node[dot] at (0,0) {};
        \draw (-1+\e,0) -- (1-\e,0);
        \draw (0,-1+\e) -- (0,1-\e);
        \end{scope}
    }
    
    \begin{scope}[biclique,shift={(4,\p-2)}]
    \node[dot] at (0,0) {};
    \draw (-1+\e,0) -- (1-\e,0);
    \draw (0,-1+\e) -- (0,1-\e);
    \end{scope}

    \foreach \x in {1,6} {
        \begin{scope}[biclique,shift={(\x,\p-2)}]
            \draw[fill=gray!30] (0,0) -- (1,0) -- (1,1) -- (0,1) -- cycle;
        \end{scope}
    }
    
    \foreach \x in {2,5} {
        \begin{scope}[biclique,shift={(\x,\p-3)}]
            \draw[fill=gray!30] (0,0) -- (1,0) -- (1,1) -- (0,1) -- cycle;
        \end{scope}
    }
    
    \foreach \x in {3,4} {
        \begin{scope}[biclique,shift={(\x,\p-4)}]
            \draw[fill=gray!30] (0,0) -- (1,0) -- (1,1) -- (0,1) -- cycle;
        \end{scope}
    }

    \draw[line width=3pt,red] (4,1+\w) -- (4,2-\w);
   
\end{tikzpicture}}
	\caption{}
	\label{fig:staircase_4}
\end{subfigure}\hspace{2mm}
\begin{subfigure}{.32\textwidth}\centering
	\scalebox{.69}{\begin{tikzpicture}[scale=.85]
\tikzset{dot/.style = {draw,fill,circle, inner sep = 1.95pt},
frame/.style = {draw=lightgray,line width=.5pt},
biclique/.style = {draw,line width=2pt},
}
    
    \def\e{.2}
    \def\w{.1}
    \def\l{.4}
    \def\p{2}
    \def\q{8}
    
    \clip(-\l,-1.5-\l) rectangle (\q+\l,\p+1.5+3.2*\l);
    
    \fill[gray!10] (1,\p) -- (\q-1,\p) -- (\q-1,0) -- (1,0) -- cycle;
    
    \foreach \x in {0,...,\q} {
        \draw [frame] (\x,0) -- (\x,\p+\l);
    }

    \foreach \y in {0,...,\p} {
        \draw [frame] (-\l,\y) -- (\q+\l,\y);
    }

    \begin{scope}[biclique,shift={(0,\p-1)}]
    \draw[fill=gray!30] (0,0) -- (1,0) -- (1,1) -- (0,1) -- cycle;
    \end{scope}

    
    
    \begin{scope}[biclique,shift={(\q-1,\p-1)}]
    \draw[fill=gray!30] (0,0) -- (1,0) -- (1,1) -- (0,1) -- cycle;
    \end{scope}
    
    \foreach \x in {1,2,3} {
        \begin{scope}[biclique,shift={(2*\x,\p)}]
        \node[dot] at (0,0) {};
        \draw (-1+\e,0) -- (1-\e,0);
        \draw (0,-1+\e) -- (0,\l);
        \end{scope}
    }

    \foreach \x in {1,6} {
        \begin{scope}[biclique,shift={(\x,\p-2)}]
            \draw[fill=gray!30] (0,0) -- (1,0) -- (1,1) -- (0,1) -- cycle;
        \end{scope}
    }

    \draw[line width=3pt,red] (3,\w) -- (3,1-\w);
    
    \draw[line width=3pt,red] (5,\w) -- (5,1-\w);
    
    \foreach \x in {3,5} {
        \begin{scope}[biclique,shift={(\x,\p-1)}]
        \node[dot] at (0,0) {};
        \draw (-1+\e,0) -- (1-\e,0);
        \draw (0,0) -- (0,1-\e);
        \end{scope}
    }
   
\end{tikzpicture}}
	\caption{}
	\label{fig:staircase_5}
\end{subfigure}
\caption{}
\label{fig:staircase_thick}
\end{figure}

A \emph{double staircase} is a pair $(\mathsf{S}, \mathsf{S}')$, where $\mathsf{S}$ and $\mathsf{S}'$ are distinct staircases such that there is exactly one thick edge in $\mathsf{S} \cup \mathsf{S}'$. Suppose $\mathsf{S}$ has length $2a$ and  $\mathsf{S}'$ has length $2b$. Observe that either $a+b=p-2$ or $a+b=p$ and $\mathcal C$ must cover $\mathsf{S} \cup \mathsf{S}'$ as in Figures~\ref{fig:double_staircase_0} and~\ref{fig:double_staircase_2}, respectively.  

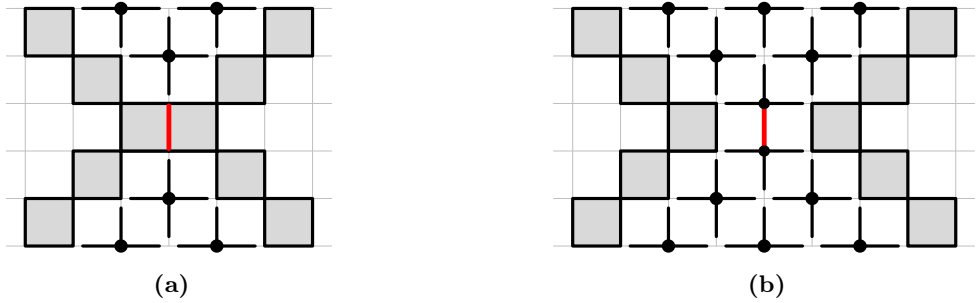
\begin{figure}[ht!]\centering
\begin{subfigure}{.45\textwidth}\centering
    \scalebox{.6}{\begin{tikzpicture}[scale=1.05]
\tikzset{dot/.style = {draw,fill,circle, inner sep = 2.25pt},
frame/.style = {draw=lightgray,line width=.5pt},
biclique/.style = {draw,line width=2pt},
}
    
    \def\e{.2}
    \def\w{.4}
    \def\p{5}
    \def\q{6}
    
    \clip (-\w,-\e) rectangle (\q+\w,\p+\e);

    \foreach \x in {0,...,\q} {
        \draw [frame] (\x,0) -- (\x,\p);
    }

    \foreach \y in {0,...,\p} {
        \draw [frame] (-\w,\y) -- (\q+\w,\y);
    }

    \foreach \x in {1,2,3} {
        \begin{scope}[biclique,shift={(\x-1,\p-\x)}]
            \draw[fill=gray!30] (0,0) -- (1,0) -- (1,1) -- (0,1) -- cycle;
        \end{scope}
        
        \begin{scope}[biclique,shift={(\q-\x,\x-1)}]
           \draw[fill=gray!30] (0,0) -- (1,0) -- (1,1) -- (0,1) -- cycle;
        \end{scope}
        
        \begin{scope}[biclique,shift={(\x-1,\x-1)}]
            \draw[fill=gray!30] (0,0) -- (1,0) -- (1,1) -- (0,1) -- cycle;
        \end{scope}
    
        \begin{scope}[biclique,shift={(\q-\x,\p-\x)}]
            \draw[fill=gray!30] (0,0) -- (1,0) -- (1,1) -- (0,1) -- cycle;
        \end{scope}
    }

    \begin{scope}[biclique,shift={(3,\p-1)}]
        \node [dot] at (0,0) {}; 
        \draw (-1+\e,0) -- (1-\e,0);
        \draw (0,-1+\e) -- (0,1-\e);
    \end{scope}
    
    \begin{scope}[biclique,shift={(3,1)}]
        \node [dot] at (0,0) {}; 
        \draw (-1+\e,0) -- (1-\e,0);
        \draw (0,-1+\e) -- (0,1-\e);
    \end{scope}
    
    \begin{scope}[biclique,shift={(2,0)}]
        \node [dot] at (0,0) {}; 
        \draw (-1+\e,0) -- (1-\e,0);
        \draw (0,0) -- (0,1-\e);
    \end{scope}
    
    \begin{scope}[biclique,shift={(4,0)}]
        \node [dot] at (0,0) {}; 
        \draw (-1+\e,0) -- (1-\e,0);
        \draw (0,0) -- (0,1-\e);
    \end{scope}
    
    \begin{scope}[biclique,shift={(2,\p)}]
        \node [dot] at (0,0) {}; 
        \draw (-1+\e,0) -- (1-\e,0);
        \draw (0,-1+\e) -- (0,0);
    \end{scope}
    
    \begin{scope}[biclique,shift={(4,\p)}]
        \node [dot] at (0,0) {}; 
        \draw (-1+\e,0) -- (1-\e,0);
        \draw (0,-1+\e) -- (0,0);
    \end{scope}
    
    \draw[red,line width=3pt](3,2+.05)--(3,3-.05);

\end{tikzpicture}}
    \caption{}
    \label{fig:double_staircase_0}
\end{subfigure}\hspace{3mm}
\begin{subfigure}{.45\textwidth}\centering
	\scalebox{.6}{\begin{tikzpicture}[scale=1.05]
\tikzset{dot/.style = {draw,fill,circle, inner sep = 2.25pt},
frame/.style = {draw=lightgray,line width=.5pt},
biclique/.style = {draw,line width=2pt},
}
    
    \def\e{.2}
    \def\w{.4}
    \def\p{5}
    \def\q{8}
    
    \clip (-\w,-\e) rectangle (\q+\w,\p+\e);

    \foreach \x in {0,...,\q} {
        \draw [frame] (\x,0) -- (\x,\p);
    }

    \foreach \y in {0,...,\p} {
        \draw [frame] (-\w,\y) -- (\q+\w,\y);
    }

    \foreach \x in {1,2,3} {
        \begin{scope}[biclique,shift={(\x-1,\p-\x)}]
            \draw[fill=gray!30] (0,0) -- (1,0) -- (1,1) -- (0,1) -- cycle;
        \end{scope}
        
        \begin{scope}[biclique,shift={(\q-\x,\x-1)}]
           \draw[fill=gray!30] (0,0) -- (1,0) -- (1,1) -- (0,1) -- cycle;
        \end{scope}
        
        \begin{scope}[biclique,shift={(\x-1,\x-1)}]
            \draw[fill=gray!30] (0,0) -- (1,0) -- (1,1) -- (0,1) -- cycle;
        \end{scope}
    
        \begin{scope}[biclique,shift={(\q-\x,\p-\x)}]
            \draw[fill=gray!30] (0,0) -- (1,0) -- (1,1) -- (0,1) -- cycle;
        \end{scope}
    }
    
    \foreach \x in {3,5} {
        \begin{scope}[biclique,shift={(\x,\p-1)}]
            \node [dot] at (0,0) {}; 
            \draw (-1+\e,0) -- (1-\e,0);
            \draw (0,-1+\e) -- (0,1-\e);
        \end{scope}
        
        \begin{scope}[biclique,shift={(\x,1)}]
            \node [dot] at (0,0) {}; 
            \draw (-1+\e,0) -- (1-\e,0);
            \draw (0,-1+\e) -- (0,1-\e);
        \end{scope}
    }

    \foreach \x in {2,4,6} {
        \begin{scope}[biclique,shift={(\x,0)}]
            \node [dot] at (0,0) {}; 
            \draw (-1+\e,0) -- (1-\e,0);
            \draw (0,0) -- (0,1-\e);
        \end{scope}
        
        \begin{scope}[biclique,shift={(\x,\p)}]
            \node [dot] at (0,0) {}; 
            \draw (-1+\e,0) -- (1-\e,0);
            \draw (0,-1+\e) -- (0,0);
        \end{scope}
    }
    
    \begin{scope}[biclique,shift={(4,2)}]
        \draw (-1+\e,0) -- (1-\e,0);
        \draw (0,-1+\e) -- (0,1-\e);
    \end{scope}
    
    \begin{scope}[biclique,shift={(4,3)}]
        \draw (-1+\e,0) -- (1-\e,0);
        \draw (0,-1+\e) -- (0,1-\e);
    \end{scope}
    
    \draw[red,line width=3pt](4,2+.1)--(4,3-.1);
    
    \node [dot] at (4,2) {}; 
    \node [dot] at (4,3) {}; 
    
\end{tikzpicture}}
    \caption{}
    \label{fig:double_staircase_2}
\end{subfigure}
\caption{Examples of double staircases. The red edge is the only thick edge.}
\label{fig:double_staircase}
\end{figure}

We define the \emph{waste} of $\mathcal{C}$ to be 
\begin{equation} \label{eq:def_waste1}
w(\mathcal{C}) \coloneqq 4|\mathcal{C}| - |E(G_{p,q})|. 
\end{equation}
Let $\tau$ be the number of copies of $K_{1,3}$ that appear in $\mathcal{C}$ and $t_i$ be the number of edges of $G_{p,q}$ that are covered exactly $i$ times by $\mathcal{C}$. By double counting, we obtain:
\begin{equation} \label{eq:def_waste2}
w(\mathcal{C})=\tau+t_2+2t_3+3t_4.
\end{equation}

Since $|\mathcal{C}|=\nicefrac{pq}{2}-1$ and $|E(G_{p,q})|=2pq-p-q$,~\eqref{eq:def_waste1} immediately gives the following:
\begin{equation}\label{eq:minimal_waste}
w(\mathcal{C})=p+q-4\,.
\end{equation}

Note that a fence can have size at most $N \coloneqq 2p+2q-8$.  For $i \in [N]$, let $b_i$ be the number of fences of size $i$ of $\mathcal{C}$.

\begin{clm}\label{clm:trivialities}
$b_N=0$.
\end{clm}

\begin{subproof}
If $b_N=1$, then $\mathcal C$ contains $2p+2q-8$ boundary $4$-cycles.  These boundary $4$-cycles cover $6p+6q-24$ edges of $G_{p,q}$.  Thus, $2pq-7p-7q+24$ edges are not covered by boundary $4$-cycles.  It follows that 
\[
|\mathcal C| \geq 2p+2q-8+\frac{2pq-7p-7q+24}{4}=\frac{pq}{2}+\frac{p}{4}+\frac{q}{4}-2 \geq \frac{pq}{2},
\]
where the last inequality follows since $4 \leq p \leq q$.  However, this contradicts $|\mathcal C|=\nicefrac{pq}{2}-1$.  
\end{subproof}

Let $\beta$ be the number of edges that are covered twice by boundary $4$-cycles of $\mathcal{C}$, and let $c \in [4]$ be the number of corners of $G_{p,q}$ covered by the set of fences.  We now provide a lower bound on the waste of $\mathcal{C}$.

\begin{clm} \label{clm:wastelowerbound}
$w(\mathcal{C}) \geq\displaystyle \beta+\tau = p+q-2-\frac{c}{2} -\frac{b_1}{2} +\sum_{i = 3}^{N-1} \Big(\frac{i}{2}-1\Big)b_i$.  
\end{clm}

\begin{subproof}
The inequality follows immediately from~\eqref{eq:def_waste2}.  For the equality, first observe 
\(
\beta = \sum_{i=1}^{N-1} (i-1)b_i
\).
Next, because each boundary star covers two edges of $\out$, each edge of $\out$ not covered by a fence contributes $\nicefrac{1}{2}$ to $\tau$. Since $\out$ contains $2p+2q-4$ edges and the set of fences cover $c+\sum_{i=1}^{N-1} ib_i$ edges of $\out$, we have
\begin{align*}
\beta+\tau &= \sum_{i = 1}^{N-1} (i-1)b_i + \frac{1}{2}\bigg(2p+2q-4-c - \sum_{i = 1}^{N-1} ib_i\bigg)  \\[-2pt]
&=p+q-2-\frac{c}{2}- \frac{b_1}{2} +\sum_{i = 3}^{N-1} \Big(\frac{i}{2}-1\Big)b_i. \qedhere
\end{align*}
\end{subproof}

\begin{clm} \label{clm:partition}
The set of staircases of $\mathcal{C}$ can be enumerated as $\mathsf{P}_1, \dots , \mathsf{P}_n, \mathsf{S}_1, \mathsf{S}_1', \dots, \mathsf{S}_m, \mathsf{S}_m'$ such that $\mathsf{P}_i$ is a pyramid for all $i \in [n]$ and $(\mathsf{S}_j, \mathsf{S}_j')$ is a double staircase for all $j \in [m]$.  Moreover, $b_2=n$, $b_i=0$ for all $i \geq 3$, and $w(\mathcal{C})=\tau+n+m$.
\end{clm}

\begin{subproof}
The number of staircases of $\mathcal{C}$ is at least $\sum_{i=1}^{N-1}b_i+c-4$.  At most $b_2$ fences can be the tips of pyramids, so at least $b_1+\sum_{i=3}^{N-1}b_i+c-4$ staircases are not pyramids. Each of these staircases contains a thick edge by Claim~\ref{clm:containingthick}, and each thick edge can be in at most $2$ staircases.  Therefore, these staircases contribute at least $(b_1+\sum_{i=3}^{N-1}b_i+c-4)/2$ to the waste of $\mathcal{C}$ which is not counted in $\beta+\tau$.  By Claim~\ref{clm:wastelowerbound},
\begin{align*}
w(\mathcal{C})  &\geq p+q-2-\frac{c}{2} -\frac{b_1}{2}+\sum_{i = 3}^{N-1} \Big(\frac{i}{2}-1\Big)b_i + \frac{1}{2}\bigg(b_1+\sum_{i=3}^{N-1}b_i+c-4\bigg) \\[-2pt]
 &=p+q-4+\sum_{i=3}^{N-1}\Big(\frac{i-1}{2}\Big)b_i.
\end{align*}
By~\eqref{eq:minimal_waste}, $w(\mathcal{C})=p+q-4$. Hence, we must have $b_i=0$ for $i \ge 3$ and equality throughout the above argument. In particular, there are exactly $b_2$ pyramids.  The remaining staircases each contain exactly one thick edge, and each of these thick edges is in exactly two staircases.  This gives the required enumeration of the staircases of $\mathcal{C}$ and implies $w(\mathcal{C})=\tau+n+m$.
\end{subproof}

\begin{clm}
$q-1 = k (p-1) + 2 \ell$ for some integers $0 \le \ell < k $.
\end{clm}

\begin{subproof}
Let $L$ be the leftmost path of $G_{p,q}$.  Since $q \geq p$, no link contained in $L$ can generate a pyramid or be part of a double staircase.  By Claim~\ref{clm:partition}, no link is contained in $L$.  It follows that $L$ intersects at most one fence.  Since $p$ is even, it follows that $L$ intersects exactly one fence $\mathsf{B}_{\mathsf{left}}$.  Since all fences are of size $1$ or $2$ and $p$ is even,  $\mathsf{B}_{\mathsf{left}}$ is of size $1$ and $L \setminus E(\mathsf{B}_{\mathsf{left}})$ is the disjoint union of two even-length paths $P_a$ and $P_b$.  We assume that $P_a$ is above $P_b$ and that $P_a$ and $P_b$ have lengths $a$ and $b$, respectively. By the same argument, the rightmost path of $G_{p,q}$ intersects exactly one fence $\mathsf{B}_{\mathsf{right}}$.  


We first consider the case when $\mathsf{B}_{\mathsf{left}}$ is a fence that does not contain a corner.
Suppose $\mathsf{L}_1$ is the first link along the topmost path of $G_{p,q}$ and $\mathsf{S}_1$ is the staircase generated by $\mathsf{L}_1$.  By Claim~\ref{clm:partition}, there are no thick edges outside of staircases. Therefore, $\mathsf{B}_{\mathsf{left}}$ and the boundary stars force the leftmost portion of $G_{p,q}$ to be covered by $\mathcal{C}$ as in Figure~\ref{fig:grid-10_0}.  This forces $\mathsf{S}_1$ to be in a double staircase $(\mathsf{S}_1, \mathsf{S}_1')$, where $\mathsf{S}_1$ has length $2b$ and $\mathsf{S}_1'$ has length $2a$ (see Figures~\ref{fig:grid-10_2} and~\ref{fig:grid-10_1}), or $\mathsf{S}_1$ has length $2b+2$ and $\mathsf{S}_1'$ has length $2a+2$ (see Figures~\ref{fig:grid-10_4} and~\ref{fig:grid-10_3}). 

Suppose $\mathsf{L}_2$ is the next link along the topmost path of $G_{p,q}$. Because of the previous double staircase $(\mathsf{S}_1, \mathsf{S}_1')$, $\mathsf{S}_2$ must be in a double staircase $(\mathsf{S}_2, \mathsf{S}_2')$,  where $\mathsf{S}_2$ has length $2a$ and $\mathsf{S}_2'$ has length $2b$ (see Figures~\ref{fig:grid-10_2} and~\ref{fig:grid-10_4}), or $\mathsf{S}_2$ has length $2a+2$ and $\mathsf{S}_2'$ has length $2b+2$ (see Figures~\ref{fig:grid-10_1} and~\ref{fig:grid-10_3}).  Repeating the argument, we obtain a sequence of double staircases $(\mathsf{S}_1, \mathsf{S}_1'), \dots, (\mathsf{S}_{k-1}, \mathsf{S}_{k-1}')$, where $(\mathsf{S}_{k-1}, \mathsf{S}_{k-1}')$ is the last double staircase.  Note that it is possible that this sequence is empty, corresponding to the case that there are no staircases and $k-1=0$.

Again, because there are no thick edges outside of staircases, the fence $\mathsf{B}_{\mathsf{right}}$ and the boundary stars force the rightmost portion of $G_{p,q}$ to be covered as in Figure~\ref{fig:grid-10_0_mirror}.  For all $i \in [k-1]$, let $\ell_i$ and $\ell_i'$ be the lengths of $\mathsf{S}_i$ and $\mathsf{S}_i'$ respectively.  Let $\ell$ be the number of times that $\{\ell_i,\ell_i'\}=\{2a+2, 2b+2\}$.  It follows that $q-1 = k (p-1) + 2 \ell$, as required.

The case when $\mathsf{B}_{\mathsf{left}}$ contains a corner is treated exactly as above, resulting in the construction from Figure~\ref{fig:grid_example_2}.
\end{subproof}
%
%
\begin{figure}[ht!]\centering
\begin{subfigure}{.48\textwidth}\centering
	\scalebox{.55}{\begin{tikzpicture}[scale=.8]
\tikzset{dot/.style = {draw,fill,circle, inner sep = 1.9pt},
frame/.style = {draw=lightgray,line width=.5pt},
biclique/.style = {draw,line width=2pt},
brace/.style = {decorate,decoration={brace,amplitude=10pt},xshift=0pt,yshift=0pt,line width=1.5pt}
}
    \def\e{.2}
    \def\p{9}
    \def\q{16}
    \def\w{.1}
    \def\h{.3}

    \foreach \x in {0,...,\q} {
        \draw [frame] (\x,0) -- (\x,\p);
    }

    \foreach \y in {0,...,\p} {
        \draw [frame] (0,\y) -- (\q+.4,\y);
    }

\node[left] at (0,6.5) {\color{darkblue}\Large$\mathsf{B}_{\mathsf{left}}$};

\begin{scope}[]
    
    \foreach \x in {0,...,6} {
        \begin{scope}[biclique,shift={(\x+2,\p-\x-1)}]
        \draw[fill=gray!30] (0,0) -- (1,0) -- (1,1) -- (0,1) -- cycle;
        \end{scope}
    }
    
    \foreach \x in {0,1} {
        \begin{scope}[biclique,shift={(\p-2-\x,1-\x)}]
        \draw[fill=gray!30] (0,0) -- (1,0) -- (1,1) -- (0,1) -- cycle;
        \end{scope}
    }

    \foreach \x in {0,...,5} {
        \foreach \y in {1,2} {
             \begin{scope}[lightblue,biclique,shift={(\x-\y+2,\p-\x-1-\y)}]
                \draw[fill=lightblue!30] (0,0) -- (1,0) -- (1,1) -- (0,1) -- cycle;
            \end{scope}
        }
    }
    
    \begin{scope}[lightblue] 
        \begin{scope}[biclique,shift={(0,\p-1)}]
        \node[dot] at (0,0) {};
        \draw (0,0) -- (1-\e,0);
        \draw (0,-1+\e) -- (0,1-\e);
        \end{scope}
        
        \begin{scope}[biclique,shift={(1,\p)}]
        \node[dot] at (0,0) {};
        \draw (-1+\e,0) -- (1-\e,0);
        \draw (0,-1+\e) -- (0,0);
        \end{scope}
        
        \foreach \x in {1,3,5} {
            \begin{scope}[biclique,shift={(\x,0)},rotate=0]
            \node[dot] at (0,0) {};
            \draw (-1+\e,0) -- (1-\e,0);
            \draw (0,0) -- (0,1-\e);
            \end{scope}
        }
        
        \foreach \y in {1,3,5} {
            \begin{scope}[biclique,shift={(0,\y)},rotate=-90]
            \node[dot] at (0,0) {};
            \draw (-1+\e,0) -- (1-\e,0);
            \draw (0,0) -- (0,1-\e);
            \end{scope}
        }
        
        \foreach \x in {0,1} {
            \begin{scope}[biclique,shift={(1+\x,2-\x)}]
            \node[dot] at (0,0) {};
            \draw (-1+\e,0) -- (1-\e,0);
            \draw (0,-1+\e) -- (0,1-\e);
            \end{scope}
        }
        
        \foreach \x in {0,...,3} {
            \begin{scope}[biclique,shift={(1+\x,4-\x)}]
            \node[dot] at (0,0) {};
            \draw (-1+\e,0) -- (1-\e,0);
            \draw (0,-1+\e) -- (0,1-\e);
            \end{scope}
        }
    \end{scope}
    
    \begin{scope}[biclique,darkblue,shift={(0,6)}]
    \draw[fill=darkblue!30] (0,0) -- (1,0) -- (1,1) -- (0,1) -- cycle;
    \end{scope}
    
    \begin{scope}[shift={(\p,\p)},xscale=-1,yscale=-1]
        
    \begin{scope}[biclique,shift={(0,\p-1)}]
    \node[dot] at (0,0) {};
    \draw (0,0) -- (1-\e,0);
    \draw (0,-1+\e) -- (0,1-\e);
    \end{scope}
    
    \begin{scope}[biclique,shift={(1,\p)}]
    \node[dot] at (0,0) {};
    \draw (-1+\e,0) -- (1-\e,0);
    \draw (0,-1+\e) -- (0,0);
    \end{scope}
    

    \foreach \x in {1,3,5} {
        \begin{scope}[biclique,shift={(\x,0)},rotate=0]
        \node[dot] at (0,0) {};
        \draw (-1+\e,0) -- (1-\e,0);
        \draw (0,0) -- (0,1-\e);
        \end{scope}
    }
    
    \foreach \y in {1,3,5} {
        \begin{scope}[biclique,shift={(0,\y)},rotate=-90]
        \node[dot] at (0,0) {};
        \draw (-1+\e,0) -- (1-\e,0);
        \draw (0,0) -- (0,1-\e);
        \end{scope}
    }
    
    \foreach \x in {0,1} {
        \begin{scope}[biclique,shift={(1+\x,2-\x)}]
        \node[dot] at (0,0) {};
        \draw (-1+\e,0) -- (1-\e,0);
        \draw (0,-1+\e) -- (0,1-\e);
        \end{scope}
    }
    
    \foreach \x in {0,...,3} {
        \begin{scope}[biclique,shift={(1+\x,4-\x)}]
        \node[dot] at (0,0) {};
        \draw (-1+\e,0) -- (1-\e,0);
        \draw (0,-1+\e) -- (0,1-\e);
        \end{scope}
    }
    
    \end{scope}

\end{scope}

\begin{scope}[shift={(18,0)},xscale=-1]

    \foreach \x in {0,...,6} {
        \begin{scope}[biclique,shift={(\x+2,\p-\x-1)}]
        \draw[fill=gray!30] (0,0) -- (1,0) -- (1,1) -- (0,1) -- cycle;
        \end{scope}
    }
    
    \foreach \x in {0,1} {
        \begin{scope}[biclique,shift={(\p-2-\x,1-\x)}]
        \draw[fill=gray!30] (0,0) -- (1,0) -- (1,1) -- (0,1) -- cycle;
        \end{scope}
    }
    
    \begin{scope}[shift={(\p,\p)},xscale=-1,yscale=-1]
        
    \begin{scope}[biclique,shift={(0,\p-1)}]
    \node[dot] at (0,0) {};
    \draw (0,0) -- (1-\e,0);
    \draw (0,-1+\e) -- (0,1-\e);
    \end{scope}
    
    \begin{scope}[biclique,shift={(1,\p)}]
    \node[dot] at (0,0) {};
    \draw (-1+\e,0) -- (1-\e,0);
    \draw (0,-1+\e) -- (0,0);
    \end{scope}
    

    \foreach \x in {1,3,5} {
        \begin{scope}[biclique,shift={(\x,0)},rotate=0]
        \node[dot] at (0,0) {};
        \draw (-1+\e,0) -- (1-\e,0);
        \draw (0,0) -- (0,1-\e);
        \end{scope}
    }
    
    \foreach \y in {1,3,5} {
        \begin{scope}[biclique,shift={(0,\y)},rotate=-90]
        \node[dot] at (0,0) {};
        \draw (-1+\e,0) -- (1-\e,0);
        \draw (0,0) -- (0,1-\e);
        \end{scope}
    }
    
    \foreach \x in {0,1} {
        \begin{scope}[biclique,shift={(1+\x,2-\x)}]
        \node[dot] at (0,0) {};
        \draw (-1+\e,0) -- (1-\e,0);
        \draw (0,-1+\e) -- (0,1-\e);
        \end{scope}
    }
    
    \foreach \x in {0,...,3} {
        \begin{scope}[biclique,shift={(1+\x,4-\x)}]
        \node[dot] at (0,0) {};
        \draw (-1+\e,0) -- (1-\e,0);
        \draw (0,-1+\e) -- (0,1-\e);
        \end{scope}
    }
    
    \end{scope}
    
\end{scope}

\end{tikzpicture}}
    \caption{}
    \label{fig:grid-10_0}
\end{subfigure}\hspace{5mm}
\begin{subfigure}{.48\textwidth}\centering
	\scalebox{.55}{\begin{tikzpicture}[scale=.8]
\tikzset{dot/.style = {draw,fill,circle, inner sep = 1.9pt},
frame/.style = {draw=lightgray,line width=.5pt},
biclique/.style = {draw,line width=2pt},
brace/.style = {decorate,decoration={brace,amplitude=10pt},xshift=0pt,yshift=0pt,line width=1.5pt}
}
    \def\e{.2}
    \def\p{9}
    \def\q{16}
    \def\w{.1}
    \def\h{.3}

\node[right] at (0,-6.5) {\color{darkblue}\Large$\mathsf{B}_{\mathsf{right}}$};

\begin{scope}[xscale=-1,yscale=-1]
\foreach \x in {0,...,\q} {
    \draw [frame] (\x,0) -- (\x,\p);
}

\foreach \y in {0,...,\p} {
    \draw [frame] (0,\y) -- (\q+.4,\y);
}
\end{scope}

\begin{scope}[xscale=-1,yscale=-1]
    
    \foreach \x in {0,...,6} {
        \begin{scope}[biclique,shift={(\x+2,\p-\x-1)}]
        \draw[fill=gray!30] (0,0) -- (1,0) -- (1,1) -- (0,1) -- cycle;
        \end{scope}
    }
    
    \foreach \x in {0,1} {
        \begin{scope}[biclique,shift={(\p-2-\x,1-\x)}]
        \draw[fill=gray!30] (0,0) -- (1,0) -- (1,1) -- (0,1) -- cycle;
        \end{scope}
    }

    \foreach \x in {0,...,5} {
        \foreach \y in {1,2} {
             \begin{scope}[lightblue,biclique,shift={(\x-\y+2,\p-\x-1-\y)}]
                \draw[fill=lightblue!30] (0,0) -- (1,0) -- (1,1) -- (0,1) -- cycle;
            \end{scope}
        }
    }
    
    \begin{scope}[lightblue] 
        \begin{scope}[biclique,shift={(0,\p-1)}]
        \node[dot] at (0,0) {};
        \draw (0,0) -- (1-\e,0);
        \draw (0,-1+\e) -- (0,1-\e);
        \end{scope}
        
        \begin{scope}[biclique,shift={(1,\p)}]
        \node[dot] at (0,0) {};
        \draw (-1+\e,0) -- (1-\e,0);
        \draw (0,-1+\e) -- (0,0);
        \end{scope}
        
        \foreach \x in {1,3,5} {
            \begin{scope}[biclique,shift={(\x,0)},rotate=0]
            \node[dot] at (0,0) {};
            \draw (-1+\e,0) -- (1-\e,0);
            \draw (0,0) -- (0,1-\e);
            \end{scope}
        }
        
        \foreach \y in {1,3,5} {
            \begin{scope}[biclique,shift={(0,\y)},rotate=-90]
            \node[dot] at (0,0) {};
            \draw (-1+\e,0) -- (1-\e,0);
            \draw (0,0) -- (0,1-\e);
            \end{scope}
        }
        
        \foreach \x in {0,1} {
            \begin{scope}[biclique,shift={(1+\x,2-\x)}]
            \node[dot] at (0,0) {};
            \draw (-1+\e,0) -- (1-\e,0);
            \draw (0,-1+\e) -- (0,1-\e);
            \end{scope}
        }
        
        \foreach \x in {0,...,3} {
            \begin{scope}[biclique,shift={(1+\x,4-\x)}]
            \node[dot] at (0,0) {};
            \draw (-1+\e,0) -- (1-\e,0);
            \draw (0,-1+\e) -- (0,1-\e);
            \end{scope}
        }
        
    \begin{scope}[biclique,darkblue,shift={(0,6)}]
    \draw[fill=darkblue!30] (0,0) -- (1,0) -- (1,1) -- (0,1) -- cycle;
    \end{scope}
\end{scope}
    
    \begin{scope}[shift={(\p,\p)},xscale=-1,yscale=-1]
        
    \begin{scope}[biclique,shift={(0,\p-1)}]
    \node[dot] at (0,0) {};
    \draw (0,0) -- (1-\e,0);
    \draw (0,-1+\e) -- (0,1-\e);
    \end{scope}
    
    \begin{scope}[biclique,shift={(1,\p)}]
    \node[dot] at (0,0) {};
    \draw (-1+\e,0) -- (1-\e,0);
    \draw (0,-1+\e) -- (0,0);
    \end{scope}
    

    \foreach \x in {1,3,5} {
        \begin{scope}[biclique,shift={(\x,0)},rotate=0]
        \node[dot] at (0,0) {};
        \draw (-1+\e,0) -- (1-\e,0);
        \draw (0,0) -- (0,1-\e);
        \end{scope}
    }
    
    \foreach \y in {1,3,5} {
        \begin{scope}[biclique,shift={(0,\y)},rotate=-90]
        \node[dot] at (0,0) {};
        \draw (-1+\e,0) -- (1-\e,0);
        \draw (0,0) -- (0,1-\e);
        \end{scope}
    }
    
    \foreach \x in {0,1} {
        \begin{scope}[biclique,shift={(1+\x,2-\x)}]
        \node[dot] at (0,0) {};
        \draw (-1+\e,0) -- (1-\e,0);
        \draw (0,-1+\e) -- (0,1-\e);
        \end{scope}
    }
    
    \foreach \x in {0,...,3} {
        \begin{scope}[biclique,shift={(1+\x,4-\x)}]
        \node[dot] at (0,0) {};
        \draw (-1+\e,0) -- (1-\e,0);
        \draw (0,-1+\e) -- (0,1-\e);
        \end{scope}
    }
    
    \end{scope}

\end{scope}

\begin{scope}[shift={(-18,0)},yscale=-1]

    \foreach \x in {0,...,6} {
        \begin{scope}[biclique,shift={(\x+2,\p-\x-1)}]
        \draw[fill=gray!30] (0,0) -- (1,0) -- (1,1) -- (0,1) -- cycle;
        \end{scope}
    }
    
    \foreach \x in {0,1} {
        \begin{scope}[biclique,shift={(\p-2-\x,1-\x)}]
        \draw[fill=gray!30] (0,0) -- (1,0) -- (1,1) -- (0,1) -- cycle;
        \end{scope}
    }
    
    \begin{scope}[shift={(\p,\p)},xscale=-1,yscale=-1]
        
    \begin{scope}[biclique,shift={(0,\p-1)}]
    \node[dot] at (0,0) {};
    \draw (0,0) -- (1-\e,0);
    \draw (0,-1+\e) -- (0,1-\e);
    \end{scope}
    
    \begin{scope}[biclique,shift={(1,\p)}]
    \node[dot] at (0,0) {};
    \draw (-1+\e,0) -- (1-\e,0);
    \draw (0,-1+\e) -- (0,0);
    \end{scope}
    

    \foreach \x in {1,3,5} {
        \begin{scope}[biclique,shift={(\x,0)},rotate=0]
        \node[dot] at (0,0) {};
        \draw (-1+\e,0) -- (1-\e,0);
        \draw (0,0) -- (0,1-\e);
        \end{scope}
    }
    
    \foreach \y in {1,3,5} {
        \begin{scope}[biclique,shift={(0,\y)},rotate=-90]
        \node[dot] at (0,0) {};
        \draw (-1+\e,0) -- (1-\e,0);
        \draw (0,0) -- (0,1-\e);
        \end{scope}
    }
    
    \foreach \x in {0,1} {
        \begin{scope}[biclique,shift={(1+\x,2-\x)}]
        \node[dot] at (0,0) {};
        \draw (-1+\e,0) -- (1-\e,0);
        \draw (0,-1+\e) -- (0,1-\e);
        \end{scope}
    }
    
    \foreach \x in {0,...,3} {
        \begin{scope}[biclique,shift={(1+\x,4-\x)}]
        \node[dot] at (0,0) {};
        \draw (-1+\e,0) -- (1-\e,0);
        \draw (0,-1+\e) -- (0,1-\e);
        \end{scope}
    }
    
    \end{scope}

\end{scope}

\end{tikzpicture}}
    \caption{}
    \label{fig:grid-10_0_mirror}
\end{subfigure}
\caption{}
\label{fig:ends}
\end{figure}
\begin{figure}[pt]\centering
\begin{subfigure}{1\textwidth}
	\scalebox{.6}{\input{grid-10_2.tex}}
    \caption{}
    \label{fig:grid-10_2}
\end{subfigure}\\[3mm]
\begin{subfigure}{1\textwidth}
	\scalebox{.6}[-.6]{\input{grid-10_1.tex}}
    \caption{}
    \label{fig:grid-10_1}
\end{subfigure}\\[3mm]
\begin{subfigure}{1\textwidth}
	\scalebox{.6}{\input{grid-10_4.tex}}
    \caption{}
    \label{fig:grid-10_4}
\end{subfigure}\\[3mm]
\begin{subfigure}{1\textwidth}
	\scalebox{.6}[-.6]{\input{grid-10_3.tex}}
    \caption{}
    \label{fig:grid-10_3}
\end{subfigure}
\caption{}
\label{fig:grid-10}
\end{figure}
This completes the entire proof.
\end{proof}

\subsection*{Acknowledgements} 
We thank Denis Cornaz for suggesting this problem and for carefully reading an early draft of this paper. We also thank Matthias Walter for help in computing $\bc(G_{p,q})$ for small values of $p$ and $q$ via SCIP~\cite{SCIP6}.  This project is supported by ERC grant \emph{FOREFRONT} (grant agreement no. 615640) funded by the European Research Council under the EU's 7th Framework Programme (FP7/2007-2013).

\bibliographystyle{amsplain}
\bibliography{main.bib}
\end{document}